 \newtheorem{thm}{Theorem}[section]
 \newtheorem{cor}[thm]{Corollary}
 \newtheorem{lem}[thm]{Lemma}
 \theoremstyle{definition}
 \newtheorem{defn}[thm]{Definition}
 \theoremstyle{remark}
 \newtheorem{rem}[thm]{Remark}
 \newtheorem*{ex}{Example}
 \numberwithin{equation}{section}
 \newtheorem{bez}[thm]{Notation}
\def\R		{\mathbb{R}}
\def\N		{\mathbb{N}}
\def\d		{\:\mathrm{d}}
\def\Id		{\operatorname{Id}}
\def\F		{\mathscr{F}}
\renewcommand{\vec}[1]{\boldsymbol{#1}}
\def\G		{\mathcal{G}}
\def\j		{\vec{j}}
\def\k		{\vec{k}}
\def\e		{\vec{e}}
\def\c		{\vec{c}}
\def\A		{\vec{A}}
\def\B		{\vec{B}}
\def\x		{\vec{x}}
\def\y		{\vec{y}}
\def\u		{\vec{u}}
\begin{document}

%
%
%
%
%
%
%
%
%

\title{A General Geometric Fourier Transform}

\author[Bujack]{Roxana Bujack}
\address{%
Universit\"at Leipzig\\
Institut f\"ur Informatik\\
Johannisgasse 26\\
04103 Leipzig, Germany}
\email{bujack@informatik.uni-leipzig.de}

\author[Scheuermann]{Gerik Scheuermann}
\address{%
Universit\"at Leipzig\\
Institut f\"ur Informatik\\
Johannisgasse 26\\
04103 Leipzig, Germany}
\email{scheuermann@informatik.uni-leipzig.de}

\author[Hitzer]{Eckhard Hitzer}
\address{%
University of Fukui\\
Department of Applied Physics\\
3-9-1 Bunkyo\\
Fukui 910-8507, Japan}
\email{hitzer@mech.u-fukui.ac.jp}

\subjclass{Primary 99Z99; Secondary 00A00}

\keywords{Fourier transform, geometric algebra, Clifford algebra, image processing, linearity, scaling, shift}

\date{\today}

\begin{abstract}
 The increasing demand for Fourier transforms on geometric algebras has resulted in a large variety. Here we introduce one single straight forward definition of a general geometric Fourier transform covering most versions in the literature. We show which constraints are additionally necessary to obtain certain features like linearity or a shift theorem. As a result, we provide guidelines for the target-oriented design of yet unconsidered transforms that fulfill requirements in a specific application context. Furthermore, the standard theorems do not need to be shown in a slightly different form every time a new geometric Fourier transform is developed since they are proved here once and for all. 
\end{abstract}

\maketitle
\section{Introduction}
The Fourier transform by Jean Baptiste Joseph Fourier is an indispensable tool for many fields of mathematics, physics, computer science and engineering. Especially the analysis and solution of differential equations or signal and image processing can not be imagined without it any more. 
Its kernel consists of the complex exponential function. With the square root of minus one, the imaginary unit $i$, as part of the argument it is periodic and therefore suitable for the analysis of oscillating systems.
\par
William Kingdon Clifford created the geometric algebras in 1878, \cite{C1878}. They usually contain continuous submanifolds of geometric square roots of minus one \cite{HA10,HHA11}. Each multivector has a natural geometric interpretation so the generalization of the Fourier transform to multivector valued functions in the geometric algebras is very reasonable. It helps to interpret the transform, apply it in a target oriented way to the specific underlying problem and allows a new point of view on fluid mechanics.
\par
Application oriented many different definitions of Fourier transforms in geometric algebras were developed. For example the Clifford Fourier transform introduced by Jancewicz \cite{Janc90} and expanded by Ebling and Scheuermann \cite{Ebl06} and Hitzer and Mawardi \cite{HM08} or the one established by Sommen in \cite{Som82} and re-established by B\"ulow \cite{Bue99}. Further we have the quaternionic Fourier transform by Ell \cite{Ell93} and later by B\"ulow \cite{Bue99}, the spacetime Fourier transform by Hitzer \cite{Hitz07}, the Clifford Fourier transform for color images by Batard et al. \cite{BBS08}, the Cylindrical Fourier transform by Brackx et al. \cite{BSS10}, the transforms by Felsberg \cite{Fels02} or Ell and Sangwine \cite{ES00,ES07}. All these transforms have different interesting properties and deserve to be studied independently from one another. But the analysis of their similarities reveals a lot about their qualities, too. We concentrate on this matter and summarize all of them in one general definition.
\par
Recently there have been very successful approaches by De Bie, Brackx, De Schepper and Sommen to construct Clifford Fourier transforms from operator exponentials and differential equations \cite{BSS05,BS06,BS09,BSS10a}. The definition presented in this paper does not cover all of them, partly because their closed integral form is not always known or highly complicated, and partly because they can be produced by combinations and functions of our transforms. 
\par
We focus on continuous geometric Fourier transforms over flat spaces $\R^{p,q}$ in their integral representation. That way their finite, regular discrete equivalents as used in computational signal and image processing can be intuitively constructed and direct applicability to the existing practical issues and easy numerical manageability are ensured. 
%
\section{Definition of the GFT}
We examine geometric algebras $\clifford{p,q},p+q=n\in\N$ over $\R^{p+q}$ \cite{HS84} generated by the associative, bilinear geometric product \index{geometric product} with neutral element $1$ satisfying 
\begin{eqnarray}\begin{aligned}
 \e_j\e_k+\e_k\e_j=\epsilon_j\delta_{jk},
\end{aligned}\end{eqnarray}
for all $j,k\in\{1,...,n\}$ with the Kronecker symbol $\delta$ and 
\begin{eqnarray}\begin{aligned}
\epsilon_j=\begin{cases}1&\forall j=1,...,p,\\-1&\forall j=p+1,...,n.\end{cases} 
\end{aligned}\end{eqnarray}
For the sake of brevity we want to refer to arbitrary multivectors 
\begin{eqnarray}\begin{aligned}
 \A=\sum\limits_{k=0}^n\sum\limits_{1\leq j_1<...<j_k\leq n}a_{j_1...j_k}\e_{j_1}...\e_{j_k}\in\clifford{p,q},
\end{aligned}\end{eqnarray}
$ a_{j_1...j_k} \in\R,$ as 
\begin{eqnarray}\label{mimv}
 \A=\sum_{\j}a_{\j}\e_{\j}.
\end{eqnarray}
where each of the $2^n$ multi-indices $\vec{j}\subseteq\{1,...,n\}$ indicates a basis vector of $\clifford{p,q}$ by $\e_{\j}=\e_{j_1}...\e_{j_k},$ $1\leq j_1<...<j_k\leq n,\e_{\emptyset}=\e_0=1$ and its associated coefficient $a_{\j}=a_{j_1...j_k}\in\R.$
%
%
\begin{defn}\label{d:exp}
 The \textbf{exponential function} \index{exponential function} of a multivector $\A\in\clifford{p,q}$ is defined by the power series
\begin{eqnarray}\begin{aligned}
 e^{\A}&:=&\sum_{j=0}^{\infty}\frac{\A^j}{j!}.
\end{aligned}\end{eqnarray}
\end{defn}
\begin{lem}\label{l:potenzgesetz}
For two multivectors $\A\B=\B\A$ that commute amongst each other we have
 \begin{eqnarray}\begin{aligned}
 e^{\A+\B}=&e^{\A}e^{\B}.
\end{aligned}\end{eqnarray}
\end{lem}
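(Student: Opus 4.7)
The plan is to mimic the classical scalar proof, using the power series definition together with the binomial expansion of $(\A+\B)^j$, which is valid precisely because $\A$ and $\B$ commute.

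First I would write
\begin{eqnarray*}
e^{\A+\B}=\sum_{j=0}^\infty\frac{(\A+\B)^j}{j!}
\end{eqnarray*}
and expand $(\A+\B)^j$ with the binomial theorem. The only reason the binomial theorem is available here is the hypothesis $\A\B=\B\A$: without it, the $2^j$ products arising from expanding $(\A+\B)^j$ cannot be reorganized by the number of $\A$-factors. This yields
\begin{eqnarray*}
(\A+\B)^j=\sum_{k=0}^j\binom{j}{k}\A^k\B^{j-k},
\end{eqnarray*}
so that
\begin{eqnarray*}
e^{\A+\B}=\sum_{j=0}^\infty\sum_{k=0}^j\frac{1}{k!(j-k)!}\A^k\B^{j-k}.
\end{eqnarray*}

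Next I would exchange the order of summation via the Cauchy product identity, recognizing the double sum as
\begin{eqnarray*}
\sum_{k=0}^\infty\sum_{l=0}^\infty\frac{\A^k}{k!}\frac{\B^l}{l!}=\Bigl(\sum_{k=0}^\infty\frac{\A^k}{k!}\Bigr)\Bigl(\sum_{l=0}^\infty\frac{\B^l}{l!}\Bigr)=e^{\A}e^{\B}.
\end{eqnarray*}
Note that even at this step, commutativity is implicitly used when I collect $\A^k\B^{j-k}$: all the $\A$'s have been moved to the left and the $\B$'s to the right, which is only legitimate because $\A\B=\B\A$ (and hence $\A$ commutes with every power of $\B$ and vice versa).

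The only real obstacle is justifying the rearrangement of the double series. Since $\clifford{p,q}$ is finite-dimensional, it admits a submultiplicative norm $\|\cdot\|$ with $\|\A\B\|\leq C\|\A\|\|\B\|$ for some constant $C$, so $\|\A^k\|\leq C^{k-1}\|\A\|^k$ and the exponential series converges absolutely. Absolute convergence in a finite-dimensional (hence Banach) algebra is enough to license both the application of the Cauchy product formula and the unrestricted reordering of the double sum, which closes the argument. If the authors prefer a component-wise viewpoint, one can alternatively fix a basis $\e_{\j}$ and reduce everything to $2^n$ scalar absolutely convergent series, where the classical result is immediate.
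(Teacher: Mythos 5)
Your proof is correct and is essentially the argument the paper has in mind: the paper's proof simply says ``analogous to the exponent rule of real matrices,'' and that classical matrix argument is exactly the binomial expansion for commuting factors followed by the Cauchy product, justified by absolute convergence in a finite-dimensional normed algebra. You have merely written out in full the details the paper leaves implicit, so no discrepancy arises.
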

\begin{proof}
 Analogous to the exponent rule of real matrices.
\end{proof}
\begin{bez}\label{b:im}
 For each geometric algebra $\clifford{p,q}$ we will write $\mathscr I^{p,q}=\{i\in\clifford{p,q},i^2\in\R^-\}$ to denote the real multiples of all geometric square roots of minus one \index{square roots of minus one}, compare \cite{HA10} and \cite{HHA11}. We chose the symbol $\mathscr I$ to be reminiscent of the imaginary numbers.
\end{bez}
\begin{defn}\label{d:gft}\index{geometric Fourier transform!definition}
 Let $\clifford{p,q}$ be a geometric Algebra, $\A:\R^m\to\clifford{p,q}$ be a multivector field and $\x,\u\in\R^m$ vectors. A \textbf{Geometric Fourier Transform} (GFT) \index{GFT} $\F_{F_1,F_2}(\A)$ is defined by
two ordered finite sets $F_1=\{f_1(\x,\u),...,f_{\mu}(\x,\u)\},$ $F_2=\{f_{\mu+1}(\x,\u),...,f_{\nu}(\x,\u)\}$ of mappings $f_k(\x,\u):\R^m\times \R^m\to\mathscr I^{p,q},\forall k=1,...,\nu$ and the calculation rule
\begin{eqnarray}\begin{aligned}
 \F_{F_1,F_2}(\A)(\u):=\int_{\R^m}\prod_{f\in F_1}e^{-f(\x,\u)}\A(\x)\prod_{f\in F_2}e^{-f(\x,\u)}\d^m \x.
\end{aligned}\end{eqnarray}
\end{defn}
This definition combines many Fourier transforms to a single general one. It enables us to proof the well known theorems just dependent on the properties of the chosen mappings.
\begin{ex}\label{bspgft}
Depending on the choice of $F_1$ and $F_2$ we get already developed transforms.
\begin{enumerate}
\item In the case of $\A:\R^n\to\G^{n,0}, n=2\pmod 4$ or $n=3\pmod 4$, we can reproduce the Clifford Fourier transform \index{Clifford Fourier transform}  introduced by Jancewicz \cite{Janc90} for $n=3$ and expanded by Ebling and Scheuermann \cite{Ebl06} for $n=2$ and Hitzer and Mawardi \cite{HM08} for $n=2\pmod 4$ or $n=3\pmod 4$ using the configuration
\begin{eqnarray}\begin{aligned}
F_1=&\emptyset,\\F_2=&\{f_1\},\\f_1(\x,\u)=&2\pi i_n \x\cdot\u,
\end{aligned}\end{eqnarray}
with $i_n$ being the pseudoscalar of $G^{n,0}$.
\item Choosing multivector fields $\R^n\to\G^{0,n}$,\index{Sommen Fourier transform} 
\begin{eqnarray}\begin{aligned}
F_1=&\emptyset,\\F_2=&\{f_1,...,f_n\},\\f_k(\x,\u)=&2\pi \e_kx_ku_k,\forall k=1,...,n
\end{aligned}\end{eqnarray}
we have the Sommen B\"ulow Clifford Fourier transform from \cite{Som82,Bue99}.
\item For $\A:\R^2\to\G^{0,2}\approx\mathbb{H}$ the quaternionic Fourier transform \index{quaternionic Fourier transform} \cite{Ell93,Bue99} is generated by 
\begin{eqnarray}\begin{aligned}
F_1=&\{f_1\},\\F_2=&\{f_2\},\\f_1(\x,\u)=&2\pi ix_1u_1,\\f_2(\x,\u)=&2\pi jx_2u_2.
\end{aligned}\end{eqnarray}
\item Using $\G^{3,1}$ we can build the spacetime respectively the volume-time Fourier transform \index{spacetime Fourier transform} from \cite{Hitz07}\footnote{Please note that Hitzer uses a different notation in \cite{Hitz07}. His $\x=t\e_0+x_1\e_1+x_2\e_2+x_3\e_3$ corresponds to our $\x=x_1\e_1+x_2\e_2+x_3\e_3+x_4\e_4$, with $\e_0\e_0=\epsilon_0=-1$ being equivalent to our $\e_4\e_4=\epsilon_4=-1$.} with the $\G^{3,1}$-pseudoscalar $i_4$ as follows
\begin{eqnarray}\begin{aligned}
F_1=&\{f_1\},\\F_2=&\{f_2\},\\f_1(\x,\u)=& \e_4x_4u_4,\\f_2(\x,\u)=& \epsilon_4\e_4i_4(x_1u_1+x_2u_2+x_3u_3).
\end{aligned}\end{eqnarray}
\item The Clifford Fourier transform for color images \index{color image Fourier transform} by Batard, Berthier and Saint-Jean \cite{BBS08} for $m=2,n=4,\A:\R^2\to\G^{4,0}$, a fixed bivector $\B$, and the pseudoscalar $i$ can intuitively be written as 
\begin{eqnarray}\begin{aligned}
F_1=&\{f_1\},\\F_2=&\{f_2\},\\f_1(\x,\u)=&\frac12 (x_1u_1+x_2u_2)(\B+i\B),\\f_2(\x,\u)=&-\frac12 (x_1u_1+x_2u_2)(\B+i\B),\end{aligned}\end{eqnarray}
but $(\B+i\B)$ does not square to a negative real number, see \cite{HA10}. The special property that $\B$ and $i\B$ commute amongst each other allows us to express the formula using
\begin{eqnarray}\begin{aligned}
F_1=&\{f_1,f_2\},\\F_2=&\{f_3,f_4\},\\f_1(\x,\u)=&\frac12 (x_1u_1+x_2u_2)\B,\\f_2(\x,\u)=&\frac12 (x_1u_1+x_2u_2)i\B,\\f_3(\x,\u)=&-\frac12 (x_1u_1+x_2u_2)\B,\\f_4(\x,\u)=&-\frac12 (x_1u_1+x_2u_2)i\B,
\end{aligned}\end{eqnarray}
which fulfills the conditions of Definition \ref{d:gft}.
%
%
\item Using $\G^{0,n}$ and
\begin{eqnarray}\begin{aligned}
F_1=&\{f_1\},\\F_2=&\emptyset,\\f_1(\x,\u)=&{-\x\wedge\u}
\end{aligned}\end{eqnarray}
produces the cylindrical Fourier transform \index{cylindrical Fourier transform} as introduced by Brackx, de Schepper and Sommen in \cite{BSS10}.
\end{enumerate}
\end{ex}
%
\section{General Properties}
%
First we proof general properties valid for arbitrary sets $F_1,F_2$.
\begin{thm}[Existence]\label{t:ex}\index{geometric Fourier transform!existence}
 The geometric Fourier transform exists for all integrable multivector fields $\A\in L_1(\R^n)$.
\end{thm}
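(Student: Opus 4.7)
My plan is to reduce the existence question to a pointwise bound on the integrand and then invoke $\A\in L_1(\R^m)$. First I would observe that for every $f\in F_1\cup F_2$ and every fixed $(\x,\u)$, the value $f(\x,\u)$ lies in $\mathscr I^{p,q}$, i.e.\ its square is a non-positive real number. Setting $\alpha=\alpha(\x,\u):=\sqrt{-f(\x,\u)^2}\geq 0$ and, when $\alpha\neq 0$, writing $\hat f=f/\alpha$ so that $\hat f^2=-1$, the power series definition together with the commutation of $f$ with itself (so that Lemma \ref{l:potenzgesetz} style arguments apply term-by-term) yields the closed form
\begin{eqnarray*}\begin{aligned}
e^{-f(\x,\u)}=\cos\alpha-\hat f\sin\alpha,
\end{aligned}\end{eqnarray*}
with the natural convention $e^{-f}=1$ when $\alpha=0$. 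In particular, $e^{-f(\x,\u)}$ is well-defined and its Clifford-algebra norm $|e^{-f(\x,\u)}|$ is bounded by a finite constant $M_f(\u)$, since $|\cos\alpha|,|\sin\alpha|\leq 1$ and $|\hat f|$ is controlled by the algebraic structure of $\mathscr I^{p,q}$ (for all instances in Example \ref{bspgft} one even has $|e^{-f}|=1$).

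Next I would invoke the fact that the Clifford geometric product is submultiplicative with respect to the Euclidean multivector norm up to a universal constant $C_{p,q}$ depending only on the algebra: $|\A\B|\leq C_{p,q}|\A||\B|$. Applying this $\nu$ times to the integrand in Definition \ref{d:gft} gives the pointwise estimate
\begin{eqnarray*}\begin{aligned}
\left|\prod_{f\in F_1}e^{-f(\x,\u)}\,\A(\x)\,\prod_{f\in F_2}e^{-f(\x,\u)}\right|\leq C_{p,q}^{\nu}\Big(\prod_{k=1}^{\nu}M_{f_k}(\u)\Big)|\A(\x)|,
\end{aligned}\end{eqnarray*}
so the integrand is measurable (as a product of continuous and measurable functions) and pointwise dominated, for each fixed $\u$, by a constant multiple of $|\A(\x)|$.

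Since $\A\in L_1(\R^m)$ by assumption, the dominating function is integrable, and thus the integral defining $\F_{F_1,F_2}(\A)(\u)$ converges absolutely for every $\u\in\R^m$, which is exactly the existence claim. The only genuine delicacy I anticipate is justifying the uniform bound $|e^{-f(\x,\u)}|\leq M_f(\u)$ when the values of $f(\x,\u)$ roam freely over all of $\mathscr I^{p,q}$; for this I would point to the classification of geometric square roots of $-1$ given in \cite{HA10,HHA11}, which shows that $\hat f^2=-1$ forces $\hat f$ into a structured submanifold on which the Euclidean norm remains controlled, so that the closed form above yields a pointwise bound uniform in $\x$.
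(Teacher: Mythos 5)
Your argument is essentially the paper's own proof: the same cosine--sine closed form for $e^{-f(\x,\u)}$, a pointwise bound on each exponential factor, and domination of the integrand by a constant multiple of $|\A(\x)|$, which is integrable by hypothesis. The paper is even blunter about the point you flag as delicate --- it normalizes $f/|f|$ to have magnitude one and simply bounds $|e^{-f}|\leq 2$, and it likewise uses submultiplicativity of the magnitude without your constant $C_{p,q}$ --- so your version is, if anything, slightly more careful while following the identical route.
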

\begin{proof}
The property 
\begin{eqnarray}\begin{aligned}
 f^2_k(\x,\u)\in\R^-
\end{aligned}\end{eqnarray}
of the mappings $f_k$ for $k=1,...,\nu$ leads to 
\begin{eqnarray}\begin{aligned}
\frac{f_k^2(\x,\u)}{|f_k^2(\x,\u)|}=-1
\end{aligned}\end{eqnarray}
for all $f_k(\x,\u)\neq 0$. So using the decomposition
\begin{eqnarray}\begin{aligned}
 f_k(\x,\u)=\frac{f_k(\x,\u)}{|f_k(\x,\u)|}|f_k(\x,\u)|
\end{aligned}\end{eqnarray}
we can write $\forall j\in\N$
\begin{eqnarray}\begin{aligned}
 f_k^j(\x,\u)=\begin{cases}(-1)^l|f_k(\x,\u)|^j&\text{for }j=2l, l\in\N_0
\\(-1)^l\frac{f_k(\x,\u)}{|f_k(\x,\u)|}|f_k(\x,\u)|^j&\text{for }j=2l+1 ,l\in\N_0
\end{cases}
\end{aligned}\end{eqnarray}
which results in
\begin{eqnarray}\begin{aligned}\label{cossin}
 e^{-f_k(\x,\u)}=&\sum_{j=0}^{\infty}\frac{\big(-f_k(\x,\u)\big)^j}{j!}\\
=&\sum_{j=0}^{\infty}\frac{(-1)^j|f_k(\x,\u)|^{2j}}{(2j)!}
\\&-\frac{f_k(\x,\u)}{|f_k(\x,\u)|}\sum_{j=0}^{\infty}\frac{(-1)^j|f_k(\x,\u)|^{2j+1}}{(2j+1)!}\\
=&\cos\big(|f_k(\x,\u)|\big)-\frac{f_k(\x,\u)}{|f_k(\x,\u)|}\sin\big(|f_k(\x,\u)|\big)
\end{aligned}\end{eqnarray}
Because of 
\begin{eqnarray}\begin{aligned}
 |e^{-f_k(\x,\u)}|
=&\bigg|\cos\big(|f_k(\x,\u)|\big)-\frac{f_k(\x,\u)}{|f_k(\x,\u)|}\sin\big(|f_k(\x,\u)|\big)\bigg|\\
\leq&\bigg|\cos\big(|f_k(\x,\u)|\big)\bigg|+\bigg|\frac{f_k(\x,\u)}{|f_k(\x,\u)|}\bigg|\bigg|\sin\big(|f_k(\x,\u)|\big)\bigg|\\
\leq&2
\end{aligned}\end{eqnarray}
the magnitude of the improper integral 
\begin{eqnarray}\begin{aligned}
 |\F_{F_1,F_2}(\A)(\u)|=&\big|\int_{\R^m}\prod_{f\in F_1}e^{-f(\x,\u)}\A(\x)\prod_{f\in F_2}e^{-f(\x,\u)}\d^m \x\big|\\
\leq&\int_{\R^m}\prod_{f\in F_1}|e^{-f(\x,\u)}||\A(\x)|\prod_{f\in F_2}|e^{-f(\x,\u)}|\d^m \x\\
\leq&\int_{\R^m}\prod_{f\in F_1}2|\A(\x)|\prod_{f\in F_2}2\d^m \x\\
=&2^{\nu}\int_{\R^m}|\A(\x)|\d^m \x
\end{aligned}\end{eqnarray}
is finite and therefore the geometric Fourier transform exists.
\end{proof}
%
%
\begin{thm}[Scalar linearity]\label{t:lin}\index{geometric Fourier transform!linearity}
The geometric Fourier transform is linear with respect to scalar factors. Let $b,c\in\R$ and $\A,\B,\vec{C}:\R^m\to\clifford{p,q}$ be three multivector fields that satisfy $\A(\x)=b\B(\x)+c\vec C(\x)$, then
\begin{eqnarray}\begin{aligned}
 \F_{F_1,F_2}(\A)(\u)=&b\F_{F_1,F_2}(\B)(\u)+c\F_{F_1,F_2}(\vec C)(\u).
\end{aligned}\end{eqnarray}
\end{thm}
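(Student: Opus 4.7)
The plan is to reduce the statement to the linearity of the Lebesgue integral together with the fact that real numbers lie in the center of $\clifford{p,q}$. First I would substitute $\A(\x) = b\B(\x) + c\vec{C}(\x)$ directly into the integrand of Definition \ref{d:gft}, obtaining
\begin{eqnarray}\begin{aligned}
\F_{F_1,F_2}(\A)(\u) = \int_{\R^m} \prod_{f \in F_1} e^{-f(\x,\u)} \bigl( b\B(\x) + c\vec{C}(\x) \bigr) \prod_{f \in F_2} e^{-f(\x,\u)} \d^m \x.
\end{aligned}\end{eqnarray}

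Next I would invoke bilinearity of the geometric product in the middle factor to split the integrand into two summands, each of the form (scalar)$\cdot\prod e^{-f}\,(\cdot)\,\prod e^{-f}$. The crucial observation is that $b,c\in\R$ commute with every element of $\clifford{p,q}$ and in particular with each exponential $e^{-f(\x,\u)}$, since these are power series in a single multivector and hence lie in a subalgebra that intersects $\R$ centrally. Thus $b$ and $c$ can be pulled out to the front of each product. Finally, linearity of the integral, whose applicability is justified componentwise using the existence result of Theorem \ref{t:ex} for $\B$ and $\vec{C}$ separately, yields
\begin{eqnarray}\begin{aligned}
\F_{F_1,F_2}(\A)(\u) = b\F_{F_1,F_2}(\B)(\u) + c\F_{F_1,F_2}(\vec{C})(\u).
\end{aligned}\end{eqnarray}

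There is no real obstacle here; the proof is essentially bookkeeping. The only point worth flagging is the necessity that $b,c$ be \emph{real} scalars rather than arbitrary multivectors: for general multivector coefficients one could not move them past the exponentials in $F_1$ and $F_2$, and the statement would fail. This is why the theorem is formulated as \emph{scalar} linearity, and it is also the reason that a more general multivector-valued linearity result will require extra hypotheses on the commutation of the coefficients with the mappings $f_k$.
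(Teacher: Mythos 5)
Your argument is correct and coincides with the paper's own proof, which likewise appeals to distributivity of the geometric product over addition, the fact that real scalars commute with all multivectors (hence with the exponential factors), and linearity of the integral; you have merely written out the bookkeeping that the paper leaves implicit. No issues.
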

\begin{proof}
The assertion is an easy consequence of the distributivity of the geometric product over addition, the commutativity of scalars and the linearity of the integral.
\end{proof}
%
%
\section{Bilinearity}
%
All geometric Fourier transforms from the introductory example 
can also be expressed in terms of a stronger claim. The mappings $f_1,...,f_{\nu}$, with the first $\mu$ ones left of the argument function and the $\nu-\mu$ others on the right of it, are all bilinear and therefore take the form
\begin{eqnarray}
\begin{aligned}\label{bixu}
f_k(\x,\u)&=f_k(\sum_{j=1}^mx_j\e_j,\sum_{l=1}^mu_l\e_l)\\
&=\sum_{j,l=1}^mx_jf_k(\e_j,\e_l)u_l\\
&=\x^TM_k\u,
\end{aligned}
\end{eqnarray}
$\forall k=1,...,\nu$, where $M_k\in (\mathscr I^{p,q})^{m\times m},(M_k)_{jl}=f_k(\e_j,\e_l)$ according to Notation \ref{b:im}.
\begin{enumerate}
 \item In the Clifford Fourier transform $f_1$ can be written with 
\begin{eqnarray}\begin{aligned}
M_1=&2\pi i_n\Id.
\end{aligned}\end{eqnarray}
\item The $\nu=m=n$ mappings $f_k, k=1,...,n$ of the B\"ulow Clifford Fourier transform can be expressed using 
\begin{eqnarray}\begin{aligned}
(M_k)_{lj}=\begin{cases}2\pi\e_k&\text{ for }k=l=j,\\0&\text{ else.}\end{cases}
\end{aligned}\end{eqnarray}
\item Similarly the quaternionic Fourier transform is generated using 
\begin{eqnarray}\begin{aligned}
(M_1)_{l\iota}=\begin{cases}2\pi i&\text{ for }l=\iota=1,\\0&\text{ else,}\end{cases}\\
(M_2)_{l\iota}=\begin{cases}2\pi j&\text{ for }l=\iota=2,\\0&\text{ else.}\end{cases}
\end{aligned}\end{eqnarray}
%
\item We can build the spacetime Fourier transform with 
\begin{eqnarray}\begin{aligned}
(M_1)_{lj}=&\begin{cases}\e_4&\text{ for }l=j=1,\\0&\text{ else,}\end{cases}\\
(M_2)_{lj}=&\begin{cases}\epsilon_4\e_4i_4&\text{ for }l=j\in\{2,3,4\},\\0&\text{ else.}\end{cases}
\end{aligned}\end{eqnarray}
\item The Clifford Fourier transform for color images can be described by 
\begin{eqnarray}\begin{aligned}
M_1=&\frac12\B\Id,\\
M_2=&\frac12i\B\Id,\\
M_3=&-\frac12\B\Id,\\
M_4=&-\frac12i\B\Id.
\end{aligned}\end{eqnarray}
 \item The cylindrical Fourier transform can also be reproduced with mappings satisfying (\ref{bixu}) because we can write
 \begin{eqnarray}\begin{aligned}\label{wedge}
 \x\wedge \u=&\e_1\e_2x_1u_2-\e_1\e_2x_2u_1
\\&+...
\\&+\e_{m-1}\e_mx_{m-1}u_{m}-\e_{m-1}\e_mx_{m}u_{m-1}
\end{aligned}\end{eqnarray}
and set 
 \begin{eqnarray}\begin{aligned}
(M_1)_{lj}=\begin{cases}0&\text{ for }l=j,\\\e_l\e_j&\text{ else.}\end{cases}
 \end{aligned}\end{eqnarray}
\end{enumerate}
\begin{thm}[Scaling]\label{t:scaling}\index{geometric Fourier transform!scaling theorem}
 Let $0\neq a\in\R$ be a real number, $\A(\x)=\B(a\x)$ two multivector fields and all $F_1,F_2$ be bilinear mappings then the geometric Fourier transform satisfies
\begin{eqnarray}\begin{aligned}
\F_{F_1,F_2}(\A)(\u)=&|a|^{-m}\F_{F_1,F_2}(\B)\big(\frac{\u}a\big).
\end{aligned}\end{eqnarray}
\end{thm}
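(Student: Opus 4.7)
The plan is to reduce the statement to a standard change of variables in the integral, exactly as in the classical scalar Fourier case, and to use bilinearity of the $f_k$ to move the factor $a$ from the $\x$-slot to the $\u$-slot.

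First I would unfold the definition of $\F_{F_1,F_2}(\A)(\u)$ and substitute $\A(\x)=\B(a\x)$, obtaining
\begin{equation*}
\F_{F_1,F_2}(\A)(\u)=\int_{\R^m}\prod_{f\in F_1}e^{-f(\x,\u)}\,\B(a\x)\prod_{f\in F_2}e^{-f(\x,\u)}\,\d^m\x.
\end{equation*}
Then I would perform the change of variables $\y=a\x$. Since $a\neq0$, the map $\x\mapsto a\x$ is a bijection of $\R^m$ whose Jacobian determinant has absolute value $|a|^m$, so $\d^m\x=|a|^{-m}\d^m\y$; note that the possible sign change of $a$ is absorbed into this absolute value (for $a<0$ the map reverses orientation, but the $\d^m$ notation refers to the unsigned Lebesgue measure).

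The key algebraic step is handling the exponentials. Because each $f_k$ is bilinear, equation (\ref{bixu}) gives $f_k(\x,\u)=\x^TM_k\u$, and the real scalar $1/a$ commutes with every entry of $M_k$ and with $\u$, so
\begin{equation*}
f_k\big(\y/a,\,\u\big)=\tfrac1a\,\y^TM_k\u=\y^TM_k\big(\u/a\big)=f_k\big(\y,\,\u/a\big).
\end{equation*}
Hence $e^{-f_k(\x,\u)}=e^{-f_k(\y,\u/a)}$ for every $k$. Plugging this into the integral and pulling the constant $|a|^{-m}$ out yields
\begin{equation*}
\F_{F_1,F_2}(\A)(\u)=|a|^{-m}\!\int_{\R^m}\prod_{f\in F_1}e^{-f(\y,\u/a)}\,\B(\y)\prod_{f\in F_2}e^{-f(\y,\u/a)}\,\d^m\y = |a|^{-m}\F_{F_1,F_2}(\B)\big(\u/a\big).
\end{equation*}

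I do not expect a serious obstacle. The only mildly delicate points are (i) being careful that the Jacobian contributes $|a|^{-m}$ rather than $a^{-m}$ when $a<0$, which is automatic once one interprets $\d^m\x$ as the positive Lebesgue measure, and (ii) making sure that the bilinearity hypothesis is genuinely used: it is precisely what lets the real scalar $1/a$ migrate across the matrix $M_k$ inside the exponential, so that after substitution the exponents take the form $f_k(\y,\u/a)$ needed to identify the right-hand integral with $\F_{F_1,F_2}(\B)(\u/a)$.
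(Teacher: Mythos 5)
Your proof is correct and follows essentially the same route as the paper: unfold the definition, substitute $\y=a\x$ with Jacobian factor $|a|^{-m}$, and use bilinearity of the $f_k$ to rewrite $f_k(\y/a,\u)=f_k(\y,\u/a)$ so that the integral is recognized as $\F_{F_1,F_2}(\B)(\u/a)$. Your added remarks on the absolute value of the Jacobian and the explicit matrix form $\x^TM_k\u$ are fine elaborations of what the paper leaves implicit, but they do not change the argument.
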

\begin{proof}
A change of coordinates together with the bilinearity proves the assertion by
\begin{eqnarray}\begin{aligned}
 \F_{F_1,F_2}(\A)(\u)
=&\int_{\R^m}\prod_{f\in F}e^{-f(\x,\u)}\B(a\x)\prod_{f\in B}e^{-f(\x,\u)}\d^m \x\\
\overset{a\x=\y}=&\int_{\R^m}\prod_{f\in F}e^{-f(\frac{\y}a,\u)}\B(\y)\prod_{f\in B}e^{-f(\frac{\y}a,\u)}|a|^{-m}\d^m \y\\
\overset{f\text{ bilin.}}=&|a|^{-m}\int_{\R^m}\prod_{f\in F}e^{-f(\y,\frac{\u}a)}\B(\y)\prod_{f\in B}e^{-f(\y,\frac{\u}a)}\d^m \y\\
=&|a|^{-m}\F_{F_1,F_2}(\B)\big(\frac{\u}a\big).
\end{aligned}\end{eqnarray}
\end{proof}
%
\section{Products with Invertible Factors}
%
To obtain properties of the GFT like linearity with respect to arbitrary multivectors or a shift theorem we will have to change the order of multivectors and products of exponentials. Since the geometric product usually is neither commutative nor anticommutative this is not trivial. In this section we provide useful Lemmata that allow a swap if at least one of the factors is invertible. For more information see \cite{HS84} and \cite{HHA11}.
\begin{rem}
 Every multiple of a square root of minus one $i\in\mathscr I^{p,q}$ is invertible, since from $i^2=-r,r\in\R\setminus\{0\}$ follows $i^{-1}=-\frac ir$. Because of that for all $\u,\x\in\R^m$ a function $f_k(\x,\u):\R^m\times \R^m\to\mathscr I^{p,q}$ is pointwise invertible.
\end{rem}
\begin{defn}\label{d:c}
For an invertible multivector $\B\in\clifford{p,q}$ and an arbitrary multivector $\A\in\clifford{p,q}$ we define \index{commutative part} \index{anticommutative part} \index{split w.r.t. commutativity}
\begin{eqnarray}\begin{aligned}
 \A_{\vec c^0(\B)}=&\frac12(\A+\B^{-1}\A\B),\\
\A_{\vec c^1(\B)}=&\frac12(\A-\B^{-1}\A\B).
\end{aligned}\end{eqnarray}
\end{defn}
\begin{lem}\label{l:comanticom}
Let $\B\in\clifford{p,q}$ be invertible with the unique inverse $\B^{-1}=\frac{\bar{\B}}{\B^2},\B^2\in\R\setminus\{0\}$. Every multivector $\A\in\clifford{p,q}$ can be expressed unambiguously by the sum of $\A_{\vec c^0(\B)}\in\clifford{p,q}$ that commutes and $\A_{\vec c^1(\B)}\in\clifford{p,q}$ that anticommutes with respect to $\B$. That means
\begin{eqnarray}\begin{aligned}\label{komakom}
 \A&=\A_{\vec c^0(\B)}+\A_{\vec c^1(\B)},\\
\A_{\vec c^0(\B)}\B&=\B\A_{\vec c^0(\B)},\\
\A_{\vec c^1(\B)}\B&=-\B\A_{\vec c^1(\B)}.
\end{aligned}\end{eqnarray}
\end{lem}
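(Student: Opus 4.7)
The claim has two ingredients: the decomposition identity together with the commutation/anticommutation properties, and the uniqueness of such a decomposition. The existence part is purely formal: adding the two defining expressions gives $\A_{\vec c^0(\B)}+\A_{\vec c^1(\B)}=\tfrac12(\A+\B^{-1}\A\B)+\tfrac12(\A-\B^{-1}\A\B)=\A$, so the first identity of \eqref{komakom} is immediate.

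The core observation I would use for the commutation relations is the hypothesis that $\B^2\in\R\setminus\{0\}$. Because $\B^2$ is a nonzero scalar, it commutes with every multivector and $\B^{-1}=\B/\B^2$ is itself a scalar multiple of $\B$. Therefore, multiplying $\A_{\vec c^0(\B)}$ by $\B$ on the right gives
\begin{eqnarray}\begin{aligned}
\A_{\vec c^0(\B)}\B=\tfrac12(\A\B+\B^{-1}\A\B^2)=\tfrac12(\A\B+\B^2\B^{-1}\A)=\tfrac12(\A\B+\B\A),
\end{aligned}\end{eqnarray}
and multiplying by $\B$ on the left gives $\B\A_{\vec c^0(\B)}=\tfrac12(\B\A+\B\B^{-1}\A\B)=\tfrac12(\B\A+\A\B)$, which matches. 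The same calculation with a sign change yields $\A_{\vec c^1(\B)}\B=\tfrac12(\A\B-\B\A)=-\B\A_{\vec c^1(\B)}$.

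For uniqueness I would assume a second decomposition $\A=\A_0+\A_1$ with $\A_0\B=\B\A_0$ and $\A_1\B=-\B\A_1$. Multiplying on the left by $\B^{-1}$ and on the right by $\B$ turns the commuting part into itself and flips the sign of the anticommuting part, so $\B^{-1}\A\B=\A_0-\A_1$. Combined with $\A=\A_0+\A_1$, solving the two linear equations recovers $\A_0=\A_{\vec c^0(\B)}$ and $\A_1=\A_{\vec c^1(\B)}$, establishing uniqueness.

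There is no substantial obstacle here; the only point that might look delicate is the reordering $\B^{-1}\A\B^2=\B^2\B^{-1}\A$, which depends precisely on the stated hypothesis that $\B^2$ is a real scalar. If that hypothesis were dropped the argument would fail, which is why I would isolate and emphasize this step rather than treating it as a routine rearrangement.
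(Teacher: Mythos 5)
Your proof is correct and follows essentially the same route as the paper: the key step in both is that $\B^2\in\R\setminus\{0\}$ makes $\B^{-1}$ effectively a scalar multiple of $\B$, so $\B^{-1}$ and $\B$ can be interchanged around $\A$ to verify the (anti)commutation relations, and uniqueness in both cases comes from the same linear manipulation showing the commuting and anticommuting parts are forced to be $\frac12(\A\pm\B^{-1}\A\B)$. No gaps; your conjugation phrasing of the uniqueness step is just a mildly tidier presentation of the paper's argument.
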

\begin{proof} We will only prove the assertion for $\A_{\vec c^0(\B)}$.\\
\textbf{Existence:}
With Definition \ref{d:c} we get
\begin{eqnarray}\begin{aligned}
 \A_{\vec c^0(\B)}+\A_{\vec c^1(\B)}=&\frac12(\A+\B^{-1}\A\B+\A-\B^{-1}\A\B)\\
=&\A
\end{aligned}\end{eqnarray}
and considering
\begin{eqnarray}\begin{aligned}
 \B^{-1}\A\B=\frac{\bar{\B}\A\B}{\B^2}=\B\A\B^{-1}
\end{aligned}\end{eqnarray}
we also get
\begin{eqnarray}\begin{aligned}
 \A_{\vec c^0(\B)}\B=&\frac12(\A+\B^{-1}\A\B)\B\\
=&\frac12(\A+\B\A\B^{-1})\B\\
=&\frac12(\A\B+\B\A)\\
=&\B\frac12(\B^{-1}\A\B+\A)\\
=&\B\A_{\vec c^0(\B)}
\end{aligned}\end{eqnarray}
\textbf{Uniqueness:}
From the first claim in (\ref{komakom}) we get
 \begin{eqnarray}\begin{aligned}
\A_{\vec c^1(\B)}=&\A-\A_{\vec c^0(\B)},
\end{aligned}\end{eqnarray}
together with the third one this leads to
\begin{eqnarray}\begin{aligned}
(\A-\A_{\vec c^0(\B)})\B=&-\B(\A-\A_{\vec c^0(\B)})\\
\A\B-\A_{\vec c^0(\B)}\B=&-\B\A+\B\A_{\vec c^0(\B)}\\
\A\B+\B\A=&\A_{\vec c^0(\B)}\B+\B\A_{\vec c^0(\B)}
\end{aligned}\end{eqnarray}
and from the second claim finally follows
\begin{eqnarray}\begin{aligned}
\A\B+\B\A=&2\B\A_{\vec c^0(\B)}\\
\frac12(\B^{-1}\A\B+\A)=&\A_{\vec c^0(\B)}.
\end{aligned}\end{eqnarray}
The derivation of the expression for $\A_{\vec c^1(\B)}$ works analogously.
\end{proof}
\begin{cor}[Decomposition w.r.t. commutativity]\label{k:prodeins}
 Let $\B\in\clifford{p,q}$ be invertible, then $\forall\A\in\clifford{p,q}$
\begin{eqnarray}\begin{aligned}
 \B\A=(\A_{\vec c^0(\B)}-\A_{\vec c^1(\B)})\B.
\end{aligned}\end{eqnarray}
\end{cor}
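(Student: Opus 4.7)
The plan is to derive this corollary as an immediate consequence of Lemma \ref{l:comanticom}, which already provides the full decomposition $\A = \A_{\vec c^0(\B)} + \A_{\vec c^1(\B)}$ together with the commutation rules $\A_{\vec c^0(\B)}\B = \B\A_{\vec c^0(\B)}$ and $\A_{\vec c^1(\B)}\B = -\B\A_{\vec c^1(\B)}$. Since $\B$ is invertible, the lemma applies to every $\A \in \clifford{p,q}$, so I have all the ingredients needed without any extra work.

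First, I would left-multiply the decomposition identity by $\B$ to obtain $\B\A = \B\A_{\vec c^0(\B)} + \B\A_{\vec c^1(\B)}$. Then, applying the two commutation relations from the lemma, the first summand becomes $\A_{\vec c^0(\B)}\B$ and the second becomes $-\A_{\vec c^1(\B)}\B$. Factoring $\B$ out on the right using the distributivity of the geometric product yields exactly
\begin{equation*}
\B\A = \A_{\vec c^0(\B)}\B - \A_{\vec c^1(\B)}\B = (\A_{\vec c^0(\B)} - \A_{\vec c^1(\B)})\B,
\end{equation*}
which is the claim.

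There is essentially no obstacle here; the statement is a routine bookkeeping consequence of the preceding lemma, and the only thing to be careful about is the sign flip introduced by moving $\B$ past the anticommuting part $\A_{\vec c^1(\B)}$. The corollary can be read as saying that conjugation-like left multiplication by $\B$ acts as $+1$ on the commuting component and as $-1$ on the anticommuting component, which is exactly the dichotomy encoded by Definition \ref{d:c}.
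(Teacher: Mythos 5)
Your argument is correct and is exactly the intended derivation: the paper states this as an immediate corollary of Lemma \ref{l:comanticom} without spelling out a proof, and your computation (left-multiplying the decomposition $\A=\A_{\vec c^0(\B)}+\A_{\vec c^1(\B)}$ by $\B$ and using the two commutation relations, minding the sign flip on the anticommuting part) is precisely that routine consequence. Nothing is missing.
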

%
%
\begin{defn}
 For $d\in\N,\A\in\clifford{p,q}$, the ordered set $B=\{\B_1,...,\B_d\}$ of invertible multivectors and any multi-index $\j\in\{0,1\}^{d}$ we define
\begin{eqnarray}\begin{aligned}
 \A_{\vec c^{\j}(\overrightarrow B)}:=&((\A_{\vec c^{j_1}(\B_1)})_{\vec c^{j_2}(\B_2)}...)_{\vec c^{j_d}(\B_d)},\\
 \A_{\vec c^{\j}(\overleftarrow B)}:=&((\A_{\vec c^{j_d}(\B_d)})_{\vec c^{j_{d-1}}(\B_{d-1})}...)_{\vec c^{j_1}(\B_1)}
\end{aligned}\end{eqnarray}
recursively with $\vec c^0, \vec c^1$ of Definition \ref{d:c}.
\end{defn}
\begin{ex}
 Let $\A=a_0+a_1\e_1+a_2\e_2+a_{12}\e_{12}\in\G^{2,0}$ then for example
\begin{eqnarray}\begin{aligned}
 \A_{\vec c^{0}(\e_{1})}=&\frac 12(\A+\e_1^{-1}\A\e_1)
\\=&\frac 12(\A +a_0+a_1\e_1-a_2\e_2-a_{12}\e_{12})
\\=&a_0+a_1\e_1
\end{aligned}\end{eqnarray}
and further
\begin{eqnarray}\begin{aligned}
 \A_{\vec c^{0,0}(\overrightarrow {\e_{1},\e_{2}})}=&(\A_{\vec c^{0}(\e_{1})})_{\vec c^{0}(\e_{2})}
\\=&(a_0+a_1\e_1)_{\vec c^{0}(\e_{2})}=a_0.
\end{aligned}\end{eqnarray}
The computation of the other multi-indices with $d=2$ works analogously and therefore
\begin{eqnarray}\begin{aligned}
 \A=&\sum_{\j\in\{0,1\}^d}\A_{\vec{c}^{\j}(\e_{1},\e_{2})}\\
=&\A_{\vec{c}^{00}(\overrightarrow{\e_{1},\e_{2}})}+\A_{\vec{c}^{01}(\overrightarrow{\e_{1},\e_{2}})}+\A_{\vec{c}^{10}(\overrightarrow{\e_{1},\e_{2}})}+\A_{\vec{c}^{11}(\overrightarrow{\e_{1},\e_{2}})}\\
=&a_0+a_1\e_1+a_2\e_2+a_{12}\e_{12}.
\end{aligned}\end{eqnarray}
\end{ex}
\begin{lem}\label{l:prodviele}
 Let $d\in\N,B=\{\B_1,...,\B_d\}$ be invertible multivectors and for $\j\in\{0,1\}^{d}$ let $|\j|:=\sum_{k=1}^dj_k$, then $\forall\A\in\clifford{p,q}$
\begin{eqnarray}\begin{aligned}
\A=&\sum_{\j\in\{0,1\}^{d}}\A_{\vec c^{\j}(\overrightarrow B)},\\
\A\B_1...\B_d=&\B_1...\B_d\sum_{\j\in\{0,1\}^{d}}(-1)^{|\j|}\A_{\vec c^{\j}(\overrightarrow B)},\\
\B_1...\B_d\A=&\sum_{\j\in\{0,1\}^{d}}(-1)^{|\j|}\A_{\vec c^{\j}(\overleftarrow B)}\B_1...\B_d.
\end{aligned}\end{eqnarray}
\end{lem}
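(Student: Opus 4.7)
The natural approach is induction on $d$, since the recursive definitions of $\A_{\vec c^{\j}(\overrightarrow B)}$ and $\A_{\vec c^{\j}(\overleftarrow B)}$ are tailor-made for such an induction.

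\textbf{Base case $d=1$.} All three claims are essentially already proved. The first is the decomposition $\A = \A_{\vec c^0(\B_1)} + \A_{\vec c^1(\B_1)}$ from Lemma \ref{l:comanticom}. The third claim is Corollary \ref{k:prodeins} written as $\B_1\A = \sum_{j_1 \in \{0,1\}}(-1)^{j_1}\A_{\vec c^{j_1}(\B_1)}\B_1$. The second claim follows from Lemma \ref{l:comanticom} by writing $\A\B_1 = \A_{\vec c^0(\B_1)}\B_1 + \A_{\vec c^1(\B_1)}\B_1 = \B_1\A_{\vec c^0(\B_1)} - \B_1\A_{\vec c^1(\B_1)} = \B_1\sum_{j_1}(-1)^{j_1}\A_{\vec c^{j_1}(\B_1)}$.

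\textbf{Inductive step.} Assume the three identities hold for $B'=\{\B_1,\ldots,\B_{d-1}\}$ and any multivector $\A$. For claim 1, apply Lemma \ref{l:comanticom} with $\B = \B_d$ to each summand $\A_{\vec c^{\j'}(\overrightarrow{B'})}$ of the inductive decomposition, using that by definition $(\A_{\vec c^{\j'}(\overrightarrow{B'})})_{\vec c^{j_d}(\B_d)} = \A_{\vec c^{(\j',j_d)}(\overrightarrow B)}$. For claim 2, apply the inductive hypothesis to $\A\B_1\ldots\B_{d-1}$ and multiply on the right by $\B_d$; then move $\B_d$ to the left past each summand using the base case (which yields a factor $(-1)^{j_d}$), noting that the scalar sign $(-1)^{|\j'|}$ combines with $(-1)^{j_d}$ to give $(-1)^{|\j|}$ for $\j=(\j',j_d)$. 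For claim 3, first peel off $\B_d$ via the base case to get $\B_d\A = \sum_{j_d}(-1)^{j_d}\A_{\vec c^{j_d}(\B_d)}\B_d$, multiply by $\B_1\ldots\B_{d-1}$ on the left, and then apply the inductive version of claim 3 to each $\B_1\ldots\B_{d-1}\A_{\vec c^{j_d}(\B_d)}$. The recursive definition of $\overleftarrow B$ is designed precisely so that $((\A_{\vec c^{j_d}(\B_d)})_{\vec c^{\j''}(\overleftarrow{B'})}) = \A_{\vec c^{\j}(\overleftarrow B)}$ for $\j = (\j'',j_d)$, which makes the indices line up.

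\textbf{Main obstacle.} The computations themselves are purely formal once the scaffolding is in place. The only real risk is bookkeeping: the orders $\overrightarrow B$ and $\overleftarrow B$ differ subtly in the recursion, and one must peel off $\B_d$ from the right end of the product $\B_1\ldots\B_d$ when proving claim 2, but from the left end when proving claim 3, so that the innermost operation in the recursive definition matches the first factor removed. Handling these in the wrong order would give a formula indexed by the wrong ordering of multi-indices; once one aligns ``peel from the end closest to $\A$'' with ``apply $\vec c^{j_d}(\B_d)$ first/last'' correctly, the induction runs smoothly.
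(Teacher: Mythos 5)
Your proof is correct and is essentially the paper's own argument: the paper disposes of this lemma with the single line ``Apply Lemma \ref{l:comanticom} repeatedly,'' and your induction on $d$ is exactly that repeated application, with the index bookkeeping (outermost $\vec c^{j_d}(\B_d)$ for $\overrightarrow B$ in the second identity, innermost for $\overleftarrow B$ in the third) handled correctly. No further changes are needed.
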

\begin{proof}
 Apply Lemma \ref{l:comanticom} repeatedly.
\end{proof}
\begin{rem}
 The distinction of the two directions can be omitted using the equality
\begin{eqnarray}\begin{aligned}
 \A_{\vec c^{\j}(\overrightarrow {\B_1,...,\B_d})}= \A_{\vec c^{\j}(\overleftarrow {\B_d,...,\B_1})}.
\end{aligned}\end{eqnarray}
We established it for the sake of notational brevity and will not formulate nor proof every assertion for both directions.
\end{rem}
%
%
\begin{lem}\label{l:expcom}
 Let $F=\{f_1(\x,\u),...,f_{d}(\x,\u)\}$ be a set of pointwise invertible functions then the ordered product of their exponentials and an arbitrary multivector $\A\in\clifford{p,q}$ satisfies
\begin{eqnarray}\begin{aligned}
 \prod_{k=1}^{d}e^{-f_k(\x,\u)}\A=\sum_{\j\in\{0,1\}^d}\vec A_{\vec c^{\j}(\overleftarrow F)}(\x,\u)\prod_{k=1}^{d}e^{-(-1)^{j_k}f_k(\x,\u)},
\end{aligned}\end{eqnarray}
where $A_{\vec c^{\j}(\overleftarrow F)}(\x,\u):=A_{\vec c^{\j}(\overleftarrow {F(\x,\u)})}$ is a multivector valued function $\R^m\times\R^m\to\clifford{p,q}$.
\end{lem}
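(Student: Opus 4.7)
The plan is to induct on $d$, treating the identity pointwise in $(\x,\u)$. The base step $d=1$ isolates the effect of a single exponential via the commutative/anticommutative split, and the inductive step peels off the exponential directly adjacent to $\A$. For $d=1$, Lemma~\ref{l:comanticom} decomposes $\A = \A_{\vec c^0(f_1)} + \A_{\vec c^1(f_1)}$, with the first summand commuting with $f_1$ and the second anticommuting. Iterating this yields $f_1^j\,\A_{\vec c^{j_1}(f_1)} = (-1)^{j_1 j}\,\A_{\vec c^{j_1}(f_1)}\,f_1^j$ for every $j\in\N$; substituting into the power series of Definition~\ref{d:exp} termwise gives
\begin{equation*}
e^{-f_1}\,\A_{\vec c^{j_1}(f_1)} \;=\; \A_{\vec c^{j_1}(f_1)}\,e^{-(-1)^{j_1}f_1},
\end{equation*}
and summing over $j_1\in\{0,1\}$ establishes the claim for $d=1$.

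For the inductive step, assume the lemma for $d-1$ and write $F' := \{f_1,\dots,f_{d-1}\}$. Because $e^{-f_d}$ sits directly next to $\A$, the base case applied to that pair gives
\begin{equation*}
\prod_{k=1}^{d} e^{-f_k}\,\A
\;=\; \sum_{j_d \in \{0,1\}} \Bigl(\prod_{k=1}^{d-1} e^{-f_k}\Bigr)\A_{\vec c^{j_d}(f_d)}\,e^{-(-1)^{j_d}f_d}.
\end{equation*}
Applying the inductive hypothesis to the bracketed product with $\B := \A_{\vec c^{j_d}(f_d)}$ produces an inner sum over $\j'\in\{0,1\}^{d-1}$. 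Setting $\j:=(\j',j_d)\in\{0,1\}^d$ and unwinding the recursive definition identifies $(\A_{\vec c^{j_d}(f_d)})_{\vec c^{\j'}(\overleftarrow{F'})}$ with $\A_{\vec c^{\j}(\overleftarrow F)}$, while the exponential tails concatenate to $\prod_{k=1}^{d} e^{-(-1)^{j_k}f_k}$, closing the induction.

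The only delicate point is the bookkeeping of the split's direction. One must verify that peeling off the exponential adjacent to $\A$, namely $e^{-f_d}$, corresponds to the innermost (first-applied) split in $\A_{\vec c^{\j}(\overleftarrow F)}$, so that the induction assembles the arrow $\overleftarrow F$ rather than its reverse. Note that no commutativity among distinct $f_k$'s is required: the order of the exponentials is preserved on both sides throughout, which is what lets the argument work in complete generality.
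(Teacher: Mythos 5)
Your proof is correct and follows essentially the same route as the paper: establish the single-factor swap $e^{-f_k}\A=\A_{\vec c^0(f_k)}e^{-f_k}+\A_{\vec c^1(f_k)}e^{f_k}$ from the power series and Lemma \ref{l:comanticom}, then peel off the exponential adjacent to $\A$ and iterate, which the paper does informally and you package as an induction on $d$. The bookkeeping you flag (innermost split corresponding to $f_d$, yielding the $\overleftarrow F$ order) matches the paper's recursive definition, so there is no gap.
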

\begin{proof}
 For all $\x,\u\in\R^m$ the commutation properties of $f_k(\x,\u)$ dictate the ones of $e^{-f_k(\x,\u)}$ by
\begin{eqnarray}\begin{aligned}
 e^{-f_k(\x,\u)}\A\overset{\text{Def. }\ref{d:exp}}=&\sum_{l=0}^{\infty}\frac{(-f_k(\x,\u))^l}{l!}\A\\
\overset{\text{Lem. }\ref{l:comanticom}}=&\sum_{l=0}^{\infty}\frac{(-f_k(\x,\u))^l}{l!}(\A_{\vec c^0(f_k(\x,\u))}+\A_{\vec c^1(f_k(\x,\u))}).
\end{aligned}\end{eqnarray}
The shape of this decomposition of $\A$ may depend on $\x$ and $\u$. To stress this fact we will interpret $\A_{\vec c^0(f_k(\x,\u))}$ as a multivector function and write $\A_{\vec c^0(f_k)}(\x,\u)$. According to Lemma \ref{l:comanticom} we can move $\A_{\vec c^0(f_k)}(\x,\u)$ through all factors, because it commutes. Analogously swapping $\A_{\vec c^1(f_k)}(\x,\u)$ will change the sign of each factor because it anticommutes. Hence we get
\begin{eqnarray}\begin{aligned}
=&\A_{\vec c^0(f_k)}(\x,\u)\sum_{l=0}^{\infty}\frac{(-f_k(\x,\u))^l}{l!}+\A_{\vec c^1(f_k)}(\x,\u)\sum_{l=0}^{\infty}\frac{(f_k(\x,\u))^l}{l!}\\
=&\A_{\vec c^0(f_k)}(\x,\u)e^{-f_k(\x,\u)}+\A_{\vec c^1(f_k)}(\x,\u)e^{f_k(\x,\u)}.
\end{aligned}\end{eqnarray}
Applying this repeatedly to the product we can deduce
\begin{eqnarray}\begin{aligned}
 \prod_{k=1}^{d}e^{-f_k(\x,\u)}\A=&\prod_{k=1}^{d-1}e^{-f_k(\x,\u)}(\A_{\vec c^0(f_d)}(\x,\u)e^{-f_d(\x,\u)}
\\&+\A_{\vec c^1(f_d)}(\x,\u)e^{f_d(\x,\u)})\\
=&\prod_{k=1}^{d-2}e^{-f_k(\x,\u)}(\A_{\vec c^{0,0}(\overleftarrow{f_{d-1},f_d})}(\x,\u)e^{-f_{d-1}(\x,\u)}e^{-f_d(\x,\u)}\\
&+\A_{\vec c^{1,0}(\overleftarrow{f_{d-1},f_d})}(\x,\u)e^{f_{d-1}(\x,\u)}e^{-f_d(\x,\u)})\\
&+\A_{\vec c^{0,1}(\overleftarrow{f_{d-1},f_d})}(\x,\u)e^{-f_{d-1}(\x,\u)}e^{f_d(\x,\u)}\\
&+\A_{\vec c^{1,1}(\overleftarrow{f_{d-1},f_d})}(\x,\u)e^{f_{d-1}(\x,\u)}e^{f_d(\x,\u)})\\
=&...\\
=&\sum_{\j\in\{0,1\}^d}\vec A_{\vec c^{\j}(\overleftarrow F)}(\x,\u)\prod_{k=1}^{d}e^{-(-1)^{j_k}f_k(\x,\u)}.
\end{aligned}\end{eqnarray}
\end{proof}
%
\section{Separable GFT}
%
From now on we want to restrict ourselves to an important group of geometric Fourier transforms whose square roots of -1 are independent from the first argument.
\begin{defn}\label{d:sep}
We call a GFT \textbf{left (right) separable}, \index{separability} if
\begin{equation}
\label{ifix}
f_l=|f_l(\x,\u)|i_l(\u),
\end{equation}
$\forall l=1,...,\mu$, ($l=\mu+1,...,\nu$), where $|f_l(\x,\u)|:\R^m\times\R^m\to\R$ is a real function and $i_l:\R^m\to\mathscr I^{p,q}$ a function that does not depend on $\x$.
\end{defn}
%
\begin{ex}
 The first five transforms from the introductory example 
 are separable, while 
the cylindrical transform (vi) can not be expressed in the way of (\ref{ifix}) except for the two dimensional case.
\end{ex}
 We have seen in the proof of Lemma \ref{l:expcom} that the decomposition of a constant multivector $\vec A$ with respect to a product of exponentials generally results in multivector valued functions $\vec A_{\vec c^{\j}(F)}(\x,\u)$ of $\x$ and $\u$. Separability guarantees independence from $\x$ and therefore allows separation from the integral. 
\begin{cor}[Decomposition independent from $\x$]\label{k:expcom}
 Consider a set of functions $F=\{f_1(\x,\u),...,f_{d}(\x,\u)\}$ satisfying condition (\ref{ifix}) then the ordered product of their exponentials and an arbitrary multivector $\A\in\clifford{p,q}$ satisfies
\begin{eqnarray}\begin{aligned}
 \prod_{k=1}^{d}e^{-f_k(\x,\u)}\A=\sum_{\j\in\{0,1\}^d}\vec A_{\vec c^{\j}(\overleftarrow F)}(\u)\prod_{k=1}^{d}e^{-(-1)^{j_k}f_k(\x,\u)}.
\end{aligned}\end{eqnarray}
\end{cor}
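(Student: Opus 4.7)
The plan is to obtain the corollary as a direct specialization of Lemma \ref{l:expcom}. That lemma already gives the stated formula, except that its coefficient multivectors $\A_{\vec c^{\j}(\overleftarrow F)}(\x,\u)$ are a priori functions of both $\x$ and $\u$. So the only thing I need to show is that under the separability hypothesis \eqref{ifix}, these coefficients in fact do not depend on $\x$.

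The key observation is scalar invariance of the conjugation $\B \mapsto \B^{-1}\A\B$. Under \eqref{ifix} we have $f_k(\x,\u) = |f_k(\x,\u)|\,i_k(\u)$, where $|f_k(\x,\u)| \in \R$ is a scalar. Wherever $|f_k(\x,\u)| \neq 0$, the factor $|f_k(\x,\u)|$ commutes with everything in $\clifford{p,q}$ and cancels with its inverse, so
\[
f_k(\x,\u)^{-1}\A f_k(\x,\u) = |f_k(\x,\u)|^{-1} i_k(\u)^{-1} \A\, |f_k(\x,\u)| i_k(\u) = i_k(\u)^{-1}\A i_k(\u).
\]
By Definition \ref{d:c} this means $\A_{\vec c^0(f_k(\x,\u))} = \A_{\vec c^0(i_k(\u))}$ and analogously $\A_{\vec c^1(f_k(\x,\u))} = \A_{\vec c^1(i_k(\u))}$; both sides depend only on $\u$. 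At points where $|f_k(\x,\u)| = 0$ the exponential $e^{-f_k(\x,\u)} = 1$ is scalar, so the decomposition relative to $f_k$ becomes vacuous for that index and can be taken to agree with the limiting expression above without loss of generality.

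Iterating this observation along the recursive definition of $\A_{\vec c^{\j}(\overleftarrow F)}$, every single-step decomposition encountered in the proof of Lemma \ref{l:expcom} is performed with respect to some $i_k(\u) \in \mathscr I^{p,q}$ rather than the full $f_k(\x,\u)$. Consequently the nested expression
\[
\A_{\vec c^{\j}(\overleftarrow F)}(\u) = \bigl(\bigl(\cdots(\A_{\vec c^{j_d}(i_d(\u))})_{\vec c^{j_{d-1}}(i_{d-1}(\u))}\bigr)\cdots\bigr)_{\vec c^{j_1}(i_1(\u))}
\]
is entirely $\x$-independent. Substituting this into the identity of Lemma \ref{l:expcom} yields the claim.

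The only genuine subtlety is the bookkeeping at the degeneracy set $\{|f_k(\x,\u)| = 0\}$, where $f_k$ is not invertible; but since the corresponding exponential is the scalar $1$ there, the factor can be freely commuted past $\A$ and the decomposition identity holds by continuity (or, equivalently, by redefining $\A_{\vec c^0(f_k)} := \A$, $\A_{\vec c^1(f_k)} := 0$ on that set). Everything else is routine substitution into Lemma \ref{l:expcom}.
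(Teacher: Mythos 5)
Your proposal is correct and follows essentially the same route as the paper: the corollary is just Lemma \ref{l:expcom} combined with the observation that under (\ref{ifix}) each conjugation $f_k(\x,\u)^{-1}\A f_k(\x,\u)=i_k(\u)^{-1}\A i_k(\u)$, since the real scalar $|f_k(\x,\u)|$ cancels, so every step of the recursive decomposition depends only on $\u$. Your extra care at the set $|f_k(\x,\u)|=0$ (where one should keep the $i_k(\u)$-decomposition, as you note, so that $\x$-independence is literal) goes slightly beyond the paper, which implicitly treats the values of the $f_k$ as invertible.
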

\begin{rem}
 If a GFT can be expressed as in (\ref{ifix}) but with multiples of square roots of $-1$ $i_k\in\mathscr I^{p,q}$, which are independent from $\x$ and $\u$, the parts $\vec A_{\vec c^{\j}(\overleftarrow F)}$ of $\A$ will be constants. Note that the first five GFTs from the reference example 
 satisfy this stronger condition, too.
\end{rem}
%
%
\begin{defn}\label{d:fj}
 For a set of functions $F=\{f_1(\x,\u),...,f_{d}(\x,\u)\}$ and a multi-index ${\j\in\{0,1\}^d}$, we define the set of functions $F(\j)$ by
\begin{eqnarray}\begin{aligned}
 F(\j):=\{(-1)^{j_1}f_1(\x,\u),...,(-1)^{j_{d}}f_{d}(\x,\u)\}.
\end{aligned}\end{eqnarray}
\end{defn}
\begin{thm}[Left and right products]\label{t:products}\index{geometric Fourier transform!product theorem}
Let $\vec{C}\in\clifford{p,q}$ and $\A,\B:\R^m\to\clifford{p,q}$ be two multivector fields with $\A(\x)=\vec{C}\B(\x)$ then a left separable geometric Fourier transform obeys
\begin{eqnarray}\begin{aligned}
 \F_{F_1,F_2}(\A)(\u)=&\sum_{\j\in\{0,1\}^{\mu}}\vec C_{\vec c^{\j}(\overleftarrow{F_1})}(\u)\F_{F_1(\j),F_2}(\B)(\u).
\end{aligned}\end{eqnarray}
If $\A(\x)=\B(\x)\vec{C}$ we analogously get
\begin{eqnarray}\begin{aligned}
 \F_{F_1,F_2}(\A)(\u)=&\sum_{\k\in\{0,1\}^{(\nu-\mu)}}\F_{F_1,F_2(\k)}(\B)(\u)\vec C_{\vec c^{\k}(\overrightarrow{F_2})}(\u)
\end{aligned}\end{eqnarray}
for a right separable GFT.
\end{thm}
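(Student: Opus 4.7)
The plan is to reduce everything to Corollary \ref{k:expcom}: move the constant multivector $\vec C$ past the left (respectively right) product of exponentials inside the integrand, then recognise what remains as another GFT with sign-flipped frequency functions. No analytic issue arises, since the rearranged integrand is still integrable by Theorem \ref{t:ex} (each $(-1)^{j_k}f_k$ still takes values in $\mathscr I^{p,q}$), so the whole argument is algebraic bookkeeping together with a single application of Fubini-style linearity of the integral over a finite sum.

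Starting from the definition with $\A(\x)=\vec C\,\B(\x)$,
\[\F_{F_1,F_2}(\A)(\u)=\int_{\R^m}\prod_{f\in F_1}e^{-f(\x,\u)}\,\vec C\,\B(\x)\prod_{f\in F_2}e^{-f(\x,\u)}\d^m\x,\]
I would apply Corollary \ref{k:expcom} to the left block of exponentials. Left separability is exactly what guarantees that the decomposed pieces $\vec C_{\vec c^{\j}(\overleftarrow{F_1})}$ depend only on $\u$, so the identity
\[\prod_{k=1}^{\mu}e^{-f_k(\x,\u)}\,\vec C=\sum_{\j\in\{0,1\}^{\mu}}\vec C_{\vec c^{\j}(\overleftarrow{F_1})}(\u)\prod_{k=1}^{\mu}e^{-(-1)^{j_k}f_k(\x,\u)}\]
can be inserted into the integrand. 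Linearity of the integral handles the finite sum, the $\x$-independent factors $\vec C_{\vec c^{\j}(\overleftarrow{F_1})}(\u)$ pull out on the left, and what remains inside each summand is precisely the integrand of $\F_{F_1(\j),F_2}(\B)(\u)$ by Definition \ref{d:fj}. Reassembling yields the first claim.

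For the right-multiplication case $\A(\x)=\B(\x)\vec C$, I would run the mirror argument, pushing $\vec C$ from the left of the right block of exponentials out to its right. This uses the symmetric form of Lemma \ref{l:expcom}, built by applying Corollary \ref{k:prodeins} iteratively from the factor of $F_2$ closest to $\vec C$ outward; the arrow on $\vec C_{\vec c^{\k}(\overrightarrow{F_2})}(\u)$ records exactly that iteration order, consistent with the remark identifying $\overrightarrow{\B_1,\dots,\B_d}$ with $\overleftarrow{\B_d,\dots,\B_1}$. After substitution, interchange of the finite sum with the integral, and pulling $\vec C_{\vec c^{\k}(\overrightarrow{F_2})}(\u)$ out to the right of the integral, each summand is $\F_{F_1,F_2(\k)}(\B)(\u)$. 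The only step requiring any real care is the directional bookkeeping: matching the iterated commutative/anticommutative splits to the arrow on $\overleftarrow{F_1}$ and $\overrightarrow{F_2}$, and confirming that in each case the factor closest to $\vec C$ is decomposed first.
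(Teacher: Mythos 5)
Your proposal is correct and follows essentially the same route as the paper's proof: insert the decomposition of $\vec C$ with respect to the left (respectively right) block of exponentials via Corollary \ref{k:expcom}, use left/right separability to pull the $\u$-dependent parts $\vec C_{\vec c^{\j}(\overleftarrow{F_1})}(\u)$ (resp. $\vec C_{\vec c^{\k}(\overrightarrow{F_2})}(\u)$) out of the integral, and identify each remaining integral as $\F_{F_1(\j),F_2}(\B)(\u)$ (resp. $\F_{F_1,F_2(\k)}(\B)(\u)$). Your extra care with the mirrored direction of the decomposition in the right-product case is exactly the "follows in the same way" step the paper leaves implicit.
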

\begin{proof}
We restrict ourselves to the proof of the first assertion.
\begin{eqnarray}\begin{aligned}
 \F_{F_1,F_2}(\A)(\u)=&\int_{\R^m}\prod_{f\in F_1}e^{-f(\x,\u)}\vec{C}\B(\x)\prod_{f\in F_2}e^{-f(\x,\u)}\d^m \x\\
\overset{\text{Lem. }\ref{l:expcom}}=&\int_{\R^m}(\sum_{\j\in\{0,1\}^{\mu}}\vec C_{\vec c^{\j}(\overleftarrow{F_1})}(\u)\prod_{l=1}^{\mu}e^{-(-1)^{j_l}f_l(\x,\u)})
\\&\B(\x)\prod_{f\in F_2}e^{-f(\x,\u)}\d^m \x\\
=&\sum_{\j\in\{0,1\}^{\mu}}\vec C_{\vec c^{\j}(\overleftarrow{F_1})}(\u)\int_{\R^m}\prod_{l=1}^{\mu}e^{-(-1)^{j_l}f_l(\x,\u)}
\\&\B(\x)\prod_{f\in F_2}e^{-f(\x,\u)}\d^m \x\\
=&\sum_{\j\in\{0,1\}^{\mu}}\vec C_{\vec c^{\j}(\overleftarrow{F_1})}(\u)\F_{F_1(\j),F_2}(\B)(\u)
\end{aligned}\end{eqnarray}
The second one follows in the same way.
\end{proof}
\begin{cor}[Uniform constants]\label{k:uniformC}
 Let the claims from Theorem \ref{t:products} hold. If the constant $\vec C$ satisfies 
$\vec C=\vec C_{\vec c^{\j}(\overleftarrow{F_1})}(\u)$ for a multi-index $\j\in\{0,1\}^{\mu}$ then the theorem simplifies to
\begin{eqnarray}\begin{aligned}
 \F_{F_1,F_2}(\A)(\u)=&\vec C\F_{F_1(\j),F_2}(\B)(\u)
\end{aligned}\end{eqnarray}
for $\A(\x)=\vec{C}\B(\x)$ respectively
\begin{eqnarray}\begin{aligned}
 \F_{F_1,F_2}(\A)(\u)=&\F_{F_1,F_2(\k)}(\B)(\u)\vec C
\end{aligned}\end{eqnarray}
for $\A(\x)=\B(\x)\vec{C}$ and $\vec C=\vec C_{\vec c^{\k}(\overrightarrow{F_2})}(\u)$ for a multi-index $\k\in\{0,1\}^{(\nu-\mu)}$.\footnote{Corollary \ref{k:uniformC} follows directly from $(\vec C_{\vec c^{\j}(\overleftarrow{F_1})})_{\vec c^{\k}(\overleftarrow{F_1})}=0$ for all $\k\neq\j$ 
because no non-zero component of $\vec C$ can commute and anticommute with respect to a function in $F_1$.}
\end{cor}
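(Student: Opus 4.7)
The plan is to apply Theorem \ref{t:products} and argue that the hypothesis forces all but one summand to vanish. For $\A(\x) = \vec C \B(\x)$, Theorem \ref{t:products} gives
\[
 \F_{F_1,F_2}(\A)(\u) = \sum_{\k \in \{0,1\}^{\mu}} \vec C_{\vec c^{\k}(\overleftarrow{F_1})}(\u)\, \F_{F_1(\k),F_2}(\B)(\u),
\]
so it suffices to show $\vec C_{\vec c^{\j}(\overleftarrow{F_1})}(\u) = \vec C$ and $\vec C_{\vec c^{\k}(\overleftarrow{F_1})}(\u) = 0$ for every $\k \neq \j$. The first identity is the hypothesis itself. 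For the second I would substitute the hypothesis into the coefficient to obtain the nested expression $(\vec C_{\vec c^{\j}(\overleftarrow{F_1})}(\u))_{\vec c^{\k}(\overleftarrow{F_1})}$ singled out in the footnote.

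The heart of the argument is then that this nested decomposition vanishes whenever $\k \neq \j$. The idea is that at any index $l$ with $j_l \neq k_l$, the operators $\vec c^0(f_l)$ and $\vec c^1(f_l)$ from Lemma \ref{l:comanticom} are complementary projections whose images intersect only in $0$: a non-zero multivector $\vec D$ cannot simultaneously commute and anticommute with the invertible element $f_l(\x,\u)$, since the two conditions would imply $\vec D f_l(\x,\u) = -\vec D f_l(\x,\u)$ and hence $\vec D = 0$. I would push this through by induction on $\mu$, using the single-variable case (immediate from Definition \ref{d:c}, where $\vec C_{\vec c^0(\B)}$ manifestly has zero anticommuting part with respect to $\B$) as the base and peeling off the outermost projection $\vec c^{j_1}(f_1)$ at each step.

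The symmetric right-product statement follows by exactly the same argument applied to the second assertion of Theorem \ref{t:products}, interchanging $F_1$ with $F_2$, the overleft with the overright arrow, and the multi-indices $\j$ with $\k$ throughout.

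The main obstacle I anticipate is the inductive step, because the projections $\vec c^{j_l}(f_l)$ for different $l$ need not pairwise commute, so one cannot simply rearrange factors to bring a mismatched pair $\vec c^{j_l}(f_l)$ and $\vec c^{k_l}(f_l)$ into adjacent positions. The argument therefore has to exploit the recursive definition of $(\cdot)_{\vec c^{\j}(\overleftarrow{F_1})}$ carefully, localizing the vanishing to the first mismatched coordinate via a finer analysis of how commuting and anticommuting subspaces interact under composition, rather than relying on a naive pairing of projection operators.
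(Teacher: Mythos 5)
Your route is the paper's own: the corollary is justified there exactly by the footnote identity you cite, and in the case your argument actually covers --- namely when the conjugations $\A\mapsto f_l^{-1}\A f_l$ for the various $f_l\in F_1$ commute with one another (for instance because any two of the $i_l$ either commute or anticommute, as in all the reference examples) --- your proof is complete: then the maps $\vec c^{0}(f_l),\vec c^{1}(f_l)$ are commuting projections, the hypothesis $\vec C=\vec C_{\vec c^{\j}(\overleftarrow{F_1})}(\u)$ forces $\vec C=\vec C_{\vec c^{j_l}(f_l)}(\u)$ for every single $l$, and your ``first mismatched coordinate'' observation kills every $\vec C_{\vec c^{\k}(\overleftarrow{F_1})}(\u)$ with $\k\neq\j$. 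The obstacle you flag at the end, however, is not a technicality awaiting a finer analysis: without such a commutativity assumption the inductive step you postpone is false, because being a fixed point of the composite of oblique, non-commuting projections does not force being a fixed point of each factor. Concretely, in $\G^{3,0}$ take $i_1=\e_{12}$ and $i_2=\e_{12}+\e_2-\e_{23}$ (so $i_2^2=-1$, while $i_1,i_2$ neither commute nor anticommute), $f_l(\x,\u)=2\pi x_lu_l\,i_l$, $F_1=\{f_1,f_2\}$, $F_2=\emptyset$, and $\vec C=\frac12(1+\e_3)$. A short computation gives $\vec C_{\vec c^{0}(i_2)}=\frac12(1+\e_3-\e_1+\e_{13})$, hence $\vec C_{\vec c^{00}(\overleftarrow{i_1,i_2})}=\vec C$, so the hypothesis of the corollary holds with $\j=(0,0)$; yet $\vec C_{\vec c^{10}(\overleftarrow{i_1,i_2})}=\frac12(\e_{13}-\e_1)\neq0$ and $\vec C_{\vec c^{11}(\overleftarrow{i_1,i_2})}=\frac12(\e_1-\e_{13})\neq0$. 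Two of the summands you must eliminate survive, and the sum in Theorem \ref{t:products} does not collapse to $\vec C\,\F_{F_1(\j),F_2}(\B)(\u)$, since the surviving terms multiply two different transforms of $\B$ and do not cancel for general $\B$.

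So the gap is real and cannot be closed in the stated generality; it can only be closed by adding a hypothesis. Either assume that the values of any two functions in $F_1$ (respectively $F_2$) commute or anticommute pointwise --- then all the projections commute, and your argument goes through verbatim; this covers every transform of the introductory example --- or strengthen the assumption on $\vec C$ to $\vec C=\vec C_{\vec c^{j_l}(f_l)}(\u)$ for each $l$ separately, which is exactly what your localization to the first mismatched index uses. You should also note, in fairness, that the paper's own justification has the same weak point: the footnote's phrase that no non-zero component can both commute and anticommute with a function of $F_1$ only yields $(\A_{\vec c^{j}(f)})_{\vec c^{1-j}(f)}=0$ for one and the same $f$, and chaining this over different members of $F_1$ tacitly permutes the projections. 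So you have faithfully reconstructed the intended argument and correctly located its crux; but as written, the step you defer is not merely delicate --- without an additional commutativity hypothesis it fails.
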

%
%
\begin{cor}[Left and right linearity]\label{k:lin}
The geometric Fourier transform is left (respectively right) linear if $F_1$ (respectively $F_2$) only consists of functions $f_k$ with values in the center of $\clifford{p,q}$, that means $\forall \x,\u\in\R^m,\forall \A\in\clifford{p,q}:\A f_k(\x,\u)=f_k(\x,\u) \A$. 
\end{cor}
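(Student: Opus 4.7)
The plan is to read this off directly from the product theorem (Theorem \ref{t:products}) combined with a trivial observation about the splitting in Definition \ref{d:c}. I will only write out the left-linearity case; the right-linearity case is completely symmetric, using the second assertion of Theorem \ref{t:products}.

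First I would take an arbitrary constant $\vec{C}\in\clifford{p,q}$ and set $\A(\x)=\vec{C}\B(\x)$. Applying Theorem \ref{t:products} gives
\begin{eqnarray}\begin{aligned}
\F_{F_1,F_2}(\A)(\u)=\sum_{\j\in\{0,1\}^{\mu}}\vec{C}_{\vec c^{\j}(\overleftarrow{F_1})}(\u)\,\F_{F_1(\j),F_2}(\B)(\u).
\end{aligned}\end{eqnarray}
The hypothesis is that every $f_k(\x,\u)$ with $k=1,\ldots,\mu$ takes values in the center of $\clifford{p,q}$. In particular, for each $k$ and each pair $(\x,\u)$, the invertible element $f_k(\x,\u)$ commutes with every $\A\in\clifford{p,q}$, so $f_k(\x,\u)^{-1}\A f_k(\x,\u)=\A$. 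By Definition \ref{d:c} this forces
\begin{eqnarray}\begin{aligned}
\A_{\vec c^0(f_k(\x,\u))}=\A,\qquad \A_{\vec c^1(f_k(\x,\u))}=0
\end{aligned}\end{eqnarray}
for every $\A$.

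Next I would propagate this through the recursive definition of $\vec{C}_{\vec c^{\j}(\overleftarrow{F_1})}(\u)$: a straightforward induction on the length of the multi-index $\j$ shows that as soon as a single entry $j_k$ equals $1$, the corresponding splitting step produces $0$, while the $j_k=0$ step acts as the identity. Consequently $\vec{C}_{\vec c^{\j}(\overleftarrow{F_1})}(\u)=\vec{C}$ if $\j=(0,\ldots,0)$ and $\vec{C}_{\vec c^{\j}(\overleftarrow{F_1})}(\u)=0$ otherwise. Since $F_1((0,\ldots,0))=F_1$ by Definition \ref{d:fj}, the sum collapses to
\begin{eqnarray}\begin{aligned}
\F_{F_1,F_2}(\A)(\u)=\vec{C}\,\F_{F_1,F_2}(\B)(\u),
\end{aligned}\end{eqnarray}
which is exactly the statement that constants on the left can be pulled through the transform.

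Finally I would combine this multiplicative compatibility with the scalar linearity from Theorem \ref{t:lin}: for any $\vec{C}_1,\vec{C}_2\in\clifford{p,q}$ and multivector fields $\B_1,\B_2$, expanding $\vec{C}_i\B_i(\x)$ by scalar linearity term by term and then applying the above to each constant factor yields left linearity over $\clifford{p,q}$. The right-linearity assertion is obtained identically from the second formula of Theorem \ref{t:products}. There is no real obstacle here; the only point requiring care is the inductive verification that the splittings $\vec c^{\j}(\overleftarrow{F_1})$ annihilate $\vec{C}$ except at $\j=\vec{0}$, and this is essentially the same observation already noted in the footnote to Corollary \ref{k:uniformC}.
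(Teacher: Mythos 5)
Your proposal is correct and follows exactly the route the paper intends: Corollary \ref{k:lin} is stated as a consequence of Theorem \ref{t:products}, with centrality forcing $\vec C_{\vec c^{\j}(\overleftarrow{F_1})}=\vec C$ for $\j=(0,\ldots,0)$ and $0$ otherwise (the same observation as in the footnote to Corollary \ref{k:uniformC}), so the sum collapses and scalar linearity finishes the argument. The only cosmetic remark is that centrality of the $f_k$ makes the decomposition trivially independent of $\x$, so the separability hypothesis of Theorem \ref{t:products} plays no real role here — one could even bypass the theorem entirely by noting that $e^{-f_k(\x,\u)}$ is central and commuting $\vec C$ straight out of the integral.
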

\begin{rem}
 Note that for empty sets $F_{1}$ (or $F_2$) necessarily all elements satisfy commutativity and therefore the condition in corollary \ref{k:lin}.
\end{rem}
 The different appearances of Theorem \ref{t:products} are summarized in Table 1 and Table 2.
%
\begin{table}[hbt!]
\begin{tabular}{|l|l|l|}
\hline
&GFT		& $\A(\x)=\vec{C}\B(\x)$	 	\rule [-1.8mm]{0mm}{6mm}\\\hline
1.& Clifford	& $\F_{f_1}=\vec C\F_{f_1}$ 	 \rule [-1.8mm]{0mm}{6mm}\\
2.& B\"ulow		& $\F_{f_1,...,f_n}=\vec C\F_{f_1,...,f_n}$ 			\rule [-1.8mm]{0mm}{6mm}\\
3.& Quaternionic	& $\F_{f_1,f_2}=\vec C_{\vec c^{0}(i)}\F_{f_1,f_2}+\vec C_{\vec c^{1}(i)}\F_{-f_1,f_2}$	 		 \rule [-1.8mm]{0mm}{6mm}\\
4.& Spacetime	& $\F_{f_1,f_2}=\vec C_{\vec c^{0}(\e_4)}\F_{f_1,f_2}+\vec C_{\vec c^{1}(\e_4)}\F_{-f_1,f_2}$			\rule [-1.8mm]{0mm}{6mm}\\
5.& Color Image	&$\F_{f_1,f_2,f_3,f_4}=\vec C_{\vec c^{00}(\overleftarrow{\B,i\B})}\F_{f_1,f_2,f_3,f_4}$\rule [-1.8mm]{0mm}{6mm}\\&&\phantom{$\F_{f_1,f_2,f_3,f_4}=$}$
+\vec C_{\vec c^{10}(\overleftarrow{\B,i\B})}\F_{-f_1,f_2,f_3,f_4}$\rule [-1.8mm]{0mm}{6mm}\\&&\phantom{$\F_{f_1,f_2,f_3,f_4}=$}$
+\vec C_{\vec c^{01}(\overleftarrow{\B,i\B})}\F_{f_1,-f_2,f_3,f_4}$\rule [-1.8mm]{0mm}{6mm}\\&&\phantom{$\F_{f_1,f_2,f_3,f_4}=$}$
+\vec C_{\vec c^{11}(\overleftarrow{\B,i\B})}\F_{-f_1,-f_2,f_3,f_4}$ 	\rule [-1.8mm]{0mm}{6mm}\\
6.& Cylindrical $n=2$	& $\F_{f_1}=\vec C_{\vec c^{0}(\e_{12})}\F_{f_1}+\vec C_{\vec c^{1}(\e_{12})}\F_{-f_1}$				\rule [-1.8mm]{0mm}{6mm}\\
& Cylindrical $n\neq2$	& -				\rule [-1.8mm]{0mm}{6mm}\\\hline
\end{tabular}
\caption{Theorem \ref{t:products} (Left products) applied to the GFTs of the first example 
 enumerated in the same order. Notations: on the LHS $\F_{F_1,F_2}=\F_{F_1,F_2}(\A)(\u)$, on the RHS $\F_{F^{\prime}_1,F^{\prime}_2}=\F_{F^{\prime}_1,F^{\prime}_2}(\B)(\u)$}
\end{table}
%
\begin{table}[hbt!]
\begin{tabular}{|l|l|l|}
\hline
&GFT		& $\A(\x)=\B(\x)\vec{C}$	 	\rule [-1.8mm]{0mm}{6mm}\\\hline
1.& Clif. $n=2\pmod4$& $\F_{f_1}=\F_{f_1}\vec C_{\vec c^{0}(i)}+\F_{-f_1}\vec C_{\vec c^{1}(i)}$ \rule [-1.8mm]{0mm}{6mm}\\
& Clif. $n=3\pmod4$& $\F_{f_1}=\F_{f_1}\vec C$ 	 \rule [-1.8mm]{0mm}{6mm}\\
2.& B\"ulow		& $\F_{f_1,...,f_n}$
\rule [-1.8mm]{0mm}{6mm}\\&&\phantom{$ $}$
=\sum_{\k\in\{0,1\}^{n}}\F_{(-1)^{k_1}f_1,...,(-1)^{k_n}f_n}\vec C_{\vec c^{\k}(\overrightarrow{f_1,...,f_n})}$ 			\rule [-1.8mm]{0mm}{6mm}\\
3.& Quaternionic	& $\F_{f_1,f_2}=\F_{f_1,f_2}\vec C_{\vec c^{0}(j)}+\F_{f_1,-f_2}\vec C_{\vec c^{1}(j)}$	 \rule [-1.8mm]{0mm}{6mm}\\
4.& Spacetime	& $\F_{f_1,f_2}=\F_{f_1,f_2}\vec C_{\vec c^{0}(\e_4i_4)}+\F_{f_1,-f_2}\vec C_{\vec c^{1}(\e_4i_4)}$	 \rule [-1.8mm]{0mm}{6mm}\\
5.& Color Image	&$\F_{f_1,f_2,f_3,f_4}=\F_{f_1,f_2,f_3,f_4}\vec C_{\vec c^{00}(\overrightarrow{\B,i\B})}$\rule [-1.8mm]{0mm}{6mm}\\&&\phantom{$\F_{f_1,f_2,f_3,f_4}+$}$
+\F_{f_1,f_2,-f_3,f_4}\vec C_{\vec c^{10}(\overrightarrow{\B,i\B})}$\rule [-1.8mm]{0mm}{6mm}\\&&\phantom{$\F_{f_1,f_2,f_3,f_4}+$}$
+\F_{f_1,f_2,f_3,-f_4}\vec C_{\vec c^{01}(\overrightarrow{\B,i\B})}$\rule [-1.8mm]{0mm}{6mm}\\&&\phantom{$\F_{f_1,f_2,f_3,f_4}+$}$
+\F_{f_1,f_2,-f_3,-f_4}\vec C_{\vec c^{11}(\overrightarrow{\B,i\B})}$ 	\rule [-1.8mm]{0mm}{6mm}\\
6.& Cylindrical	& $\F_{f_1}=\F_{f_1}\vec C$				\rule [-1.8mm]{0mm}{6mm}\\\hline
\end{tabular}
\caption{Theorem \ref{t:products} (Right products) applied to the GFTs of the first example
, enumerated in the same order. Notations: on the LHS $\F_{F_1,F_2}=\F_{F_1,F_2}(\A)(\u)$, on the RHS $\F_{F^{\prime}_1,F^{\prime}_2}=\F_{F^{\prime}_1,F^{\prime}_2}(\B)(\u)$}
\end{table}
%
\par
We have seen how to change the order of a multivector and a product of exponentials in the previous section. To get a shift theorem we will have to separate sums appearing in the exponent and sort the resulting exponentials with respect to the summands. Note that corollary \ref{k:expcom} can be applied in two ways here, because exponentials appear on both sides. \par
Not every factor will need to be swapped with every other one. So, to keep things short, we will make use of the notation $\c^{(J)_l}(f_1,...,f_l,0,...,0)$ for $l\in\{1,...,d\}$ instead of distinguishing between differently sized multi-indices for every $l$ that appears. The zeros at the end substitutionally indicate real numbers. They commute with every multivector. That implies, that for the last $d-l$ factors no swap and therefore no separation needs to be made. It would also be possible to use the notation $\c^{(J)_l}(f_1,...,f_{l-1},0,...,0)$ for $l\in\{1,...,d\}$, because every function commutes with itself. We chose the other one where no exceptional treatment of $f_1$ is necessary. But please note that the multivectors $(J)_l$ indicating the commutative and anticommutative parts will all have zeros from $l$ to $d$ and therefore form a strictly triangular matrix. %
%
\begin{lem}\label{l:prodtrennen}
 Let a set of functions $F=\{f_1(\x,\u),...,f_{d}(\x,\u)\}$ fulfill (\ref{ifix}) and be linear with respect to $\x$. Further let $J\in\{0,1\}^{d\times d}$ be a strictly lower triangular matrix, that is associated column by column with a multi-index ${\j\in\{0,1\}^{d}}$ by $\forall k=1,...,d:
(\sum_{l=1}^{d}J_{l,k})\bmod 2=j_{k}$, with $(J)_l$ being its $l$-th row, then 
\begin{eqnarray}\begin{aligned}
&\prod_{l=1}^d e^{-f_l(\x+\y,\u)}
\\&=\sum_{\j\in\{0,1\}^{d}}\sum_{\stackrel{J\in\{0,1\}^{d\times d},}{\sum_{l=1}^d(J)_l\bmod 2=\j}}\prod_{l=1}^{d} e^{-f_l(\x,\u)}_{\c^{(J)_l}(\overleftarrow{f_1,...,f_l,0,...,0})}\prod_{l=1}^{d}e^{-(-1)^{j_l}f_l(\y,\u)}
\end{aligned}\end{eqnarray}
or alternatively with strictly upper triangular matrices $J$
\begin{eqnarray}\begin{aligned}
&\prod_{l=1}^d e^{-f_l(\x+\y,\u)}
\\&=\sum_{\j\in\{0,1\}^{d}}\sum_{\stackrel{J\in\{0,1\}^{d\times d},}{\sum_{l=1}^d(J)_l\bmod 2=\j}}\prod_{l=1}^{d}e^{-(-1)^{j_l}f_l(\x,\u)}\prod_{l=1}^{d} e^{-f_l(\y,\u)}_{\c^{(J)_l}(\overrightarrow{0,...,0,f_l,...,f_d})}.
\end{aligned}\end{eqnarray}
\end{lem}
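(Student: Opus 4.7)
The proof has two phases: a \emph{splitting} phase, then a \emph{shuffling} phase. First, linearity of each $f_l$ in $\x$ gives $f_l(\x+\y,\u) = f_l(\x,\u) + f_l(\y,\u)$, and by separability (\ref{ifix}) both summands are real multiples of the same $i_l(\u)$ and so commute. Lemma \ref{l:potenzgesetz} then yields
\begin{equation*}
\prod_{l=1}^d e^{-f_l(\x+\y,\u)} = e^{-f_1(\x,\u)}e^{-f_1(\y,\u)}\, e^{-f_2(\x,\u)}e^{-f_2(\y,\u)}\cdots e^{-f_d(\x,\u)}e^{-f_d(\y,\u)}.
\end{equation*}

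Then, for $l = 2, 3, \ldots, d$ in turn, I move $e^{-f_l(\x,\u)}$ leftward past every $\y$-exponential currently to its left, swapping first with $e^{\pm f_{l-1}(\y,\u)}$, then with $e^{\pm f_{l-2}(\y,\u)}$, and so on down to $e^{\pm f_1(\y,\u)}$. Each individual swap is Lemma \ref{l:expcom} applied to the singleton $\{f_k(\y,\u)\}$ and multivector $e^{-f_l(\x,\u)}$: the $\x$-exponential decomposes into its commuting and anticommuting parts w.r.t.\ $f_k$, and the anticommuting choice flips the sign of the accompanying $e^{\pm f_k(\y,\u)}$. Separability is what lets me identify the decomposition with respect to $f_k(\y,\u)$ with the one with respect to $f_k(\x,\u)$, since both are real multiples of the common $i_k(\u)$. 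I record each binary choice in an entry $J_{l,k} \in \{0,1\}$, producing a strictly lower triangular matrix $J$.

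After all shuffling, the $l$-th $\x$-exponential has undergone successive decompositions w.r.t.\ $f_{l-1}, f_{l-2}, \ldots, f_1$ in that order, which by the $\overleftarrow{\cdot}$ convention is exactly $[e^{-f_l(\x,\u)}]_{\c^{(J)_l}(\overleftarrow{f_1,\ldots,f_l,0,\ldots,0})}$ (entries of $(J)_l$ at positions $l$ and beyond are zero and the corresponding arguments are $f_l$ itself or the scalar $0$, both giving trivial $\c^0$). The total sign flip accumulated by $e^{-f_k(\y,\u)}$ equals $(-1)^{\sum_l J_{l,k}} = (-1)^{j_k}$, so it becomes $e^{-(-1)^{j_k} f_k(\y,\u)}$. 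Summing over all admissible $J$ yields the first identity; the second follows by the mirror argument that pushes $\x$-exponentials rightward through the $\y$-exponentials instead.

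The main obstacle is the commutativity-order bookkeeping: the decomposition operators $\c^0, \c^1$ at different generators do not commute in general, so the \emph{order} in which each $e^{-f_l(\x,\u)}$ is decomposed must be tracked carefully. A formal proof by induction on $d$ — appending $e^{-f_d(\x,\u)}e^{-f_d(\y,\u)}$ to the formula for $d-1$ and then shuffling the new $e^{-f_d(\x,\u)}$ leftward through the already-parked $\y$-exponentials, thereby adding exactly the $d$-th row of $J$ — is the cleanest way to make this precise.
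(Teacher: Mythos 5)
Your proposal is correct and follows essentially the same route as the paper's proof: split each factor via linearity and Lemma \ref{l:potenzgesetz} (justified, as you note, by both summands being real multiples of the same $i_l(\u)$), then shuffle the $\x$-exponentials leftward past the $\y$-exponentials using the separable decomposition (Corollary \ref{k:expcom}/Lemma \ref{l:expcom}), recording the commute/anticommute choices in a strictly lower triangular matrix whose column parities give the signs $(-1)^{j_k}$, and finish by induction on $d$ with the mirror argument for the second identity. The only cosmetic difference is that you swap past one $\y$-exponential at a time (singleton applications) while the paper moves each $\x$-exponential past the accumulated block in one application of Corollary \ref{k:expcom}; the bookkeeping and conclusion are identical.
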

We do not explicitly indicate the dependence of the partition on $\u$ as in corollary \ref{k:expcom}, because the functions in the exponents already contain this dependence. Please note that the decomposition is pointwise.
\begin{proof} We will only prove the first assertion. The second one follows analogously by applying corollary \ref{k:expcom} the other way around. 
\begin{eqnarray}\begin{aligned}
\prod_{l=1}^d e^{-f_l(\x+\y,\u)}
\overset{F\text{ lin.}}=&\prod_{l=1}^d e^{-f_l(\x,\u)-f_l(\y,\u)}\\
\overset{\text{Lem. }\ref{l:potenzgesetz}}=&
\prod_{l=1}^d e^{-f_l(\x,\u)}e^{-f_l(\y,\u)}
\\=&e^{-f_1(\x,\u)}e^{-f_1(\y,\u)}\prod_{l=2}^d e^{-f_l(\x,\u)}e^{-f_l(\y,\u)}\\
\overset{\text{cor. }\ref{k:expcom}}=&e^{-f_1(\x,\u)}(e^{-f_2(\x,\u)}_{\c^{0}(f_{1})} e^{-f_1(\y,\u)} e^{-f_2(\y,\u)}
\\&+e^{-f_2(\x,\u)}_{\c^{1}(f_{1})} e^{f_1(\y,\u)}e^{-f_2(\y,\u)})\prod_{l=3}^d e^{-f_l(\x,\u)}e^{-f_l(\y,\u)}\\
 \end{aligned}\end{eqnarray}
 \begin{eqnarray}\begin{aligned}
\overset{\text{cor. }\ref{k:expcom}}=&e^{-f_1(\x,\u)}
(e^{-f_2(\x,\u)}_{\c^{0}(f_{1})}e^{-f_3(\x,\u)}_{\c^{00}(\overleftarrow{f_{1},f_2})} e^{-f_1(\y,\u)} e^{-f_2(\y,\u)}e^{-f_3(\y,\u)}\\&
+e^{-f_2(\x,\u)}_{\c^{0}(f_{1})}e^{-f_3(\x,\u)}_{\c^{01}(\overleftarrow{f_{1},f_2})} e^{-f_1(\y,\u)} e^{f_2(\y,\u)}e^{-f_3(\y,\u)}\\&
+e^{-f_2(\x,\u)}_{\c^{0}(f_{1})}e^{-f_3(\x,\u)}_{\c^{10}(\overleftarrow{f_{1},f_2})} e^{f_1(\y,\u)} e^{-f_2(\y,\u)}e^{-f_3(\y,\u)}\\&
+e^{-f_2(\x,\u)}_{\c^{0}(f_{1})}e^{-f_3(\x,\u)}_{\c^{11}(\overleftarrow{f_{1},f_2})} e^{f_1(\y,\u)} e^{f_2(\y,\u)}e^{-f_3(\y,\u)}\\&
+e^{-f_2(\x,\u)}_{\c^{1}(f_{1})}e^{-f_3(\x,\u)}_{\c^{00}(\overleftarrow{f_{1},f_2})} e^{f_1(\y,\u)} e^{-f_2(\y,\u)}e^{-f_3(\y,\u)}\\&
+e^{-f_2(\x,\u)}_{\c^{1}(f_{1})}e^{-f_3(\x,\u)}_{\c^{01}(\overleftarrow{f_{1},f_2})} e^{f_1(\y,\u)} e^{f_2(\y,\u)}e^{-f_3(\y,\u)}\\&
+e^{-f_2(\x,\u)}_{\c^{1}(f_{1})}e^{-f_3(\x,\u)}_{\c^{10}(\overleftarrow{f_{1},f_2})} e^{-f_1(\y,\u)} e^{-f_2(\y,\u)}e^{-f_3(\y,\u)}\\&
+e^{-f_2(\x,\u)}_{\c^{1}(f_{1})}e^{-f_3(\x,\u)}_{\c^{11}(\overleftarrow{f_{1},f_2})} e^{-f_1(\y,\u)} e^{f_2(\y,\u)}e^{-f_3(\y,\u)})
\\&\prod_{l=4}^d e^{-f_l(\x,\u)}e^{-f_l(\y,\u)}
\end{aligned}\end{eqnarray}
There are only $2^\delta$ ways of distributing the signs of $\delta$ exponents, so some of the summands can be combined. 
\begin{eqnarray}\begin{aligned}
=&e^{-f_1(\x,\u)}
((e^{-f_2(\x,\u)}_{\c^{0}(f_{1})}e^{-f_3(\x,\u)}_{\c^{00}(\overleftarrow{f_{1},f_2})}+e^{-f_2(\x,\u)}_{\c^{1}(f_{1})}e^{-f_3(\x,\u)}_{\c^{10}(\overleftarrow{f_{1},f_2})})
\\& e^{-f_1(\y,\u)} e^{-f_2(\y,\u)}e^{-f_3(\y,\u)}\\&
+(e^{-f_2(\x,\u)}_{\c^{0}(f_{1})}e^{-f_3(\x,\u)}_{\c^{01}(\overleftarrow{f_{1},f_2})}+e^{-f_2(\x,\u)}_{\c^{1}(f_{1})}e^{-f_3(\x,\u)}_{\c^{11}(\overleftarrow{f_{1},f_2})})
\\& e^{-f_1(\y,\u)} e^{f_2(\y,\u)}e^{-f_3(\y,\u)}\\&
+(e^{-f_2(\x,\u)}_{\c^{0}(f_{1})}e^{-f_3(\x,\u)}_{\c^{10}(\overleftarrow{f_{1},f_2})}+e^{-f_2(\x,\u)}_{\c^{1}(f_{1})}e^{-f_3(\x,\u)}_{\c^{00}(\overleftarrow{f_{1},f_2})} e^{f_1(\y,\u)}
\\& e^{-f_2(\y,\u)}e^{-f_3(\y,\u)}\\&
+(e^{-f_2(\x,\u)}_{\c^{0}(f_{1})}e^{-f_3(\x,\u)}_{\c^{11}(\overleftarrow{f_{1},f_2})}+e^{-f_2(\x,\u)}_{\c^{1}(f_{1})}e^{-f_3(\x,\u)}_{\c^{01}(\overleftarrow{f_{1},f_2})}) e^{f_1(\y,\u)}
\\& e^{f_2(\y,\u)}e^{-f_3(\y,\u)})
\prod_{l=4}^d e^{-f_l(\x,\u)}e^{-f_l(\y,\u)}
\end{aligned}\end{eqnarray}
To get a compact notation we expand all multi-indices by adding zeros until they have the same length. Note that the last non zero argument in terms like $\c^{000}(\overleftarrow{f_{1},0,0})$ always coincides with the exponent of the corresponding factor. Because of that it will always commute and could as well be replaced by a zero, too.
\begin{eqnarray}\begin{aligned}
 =&e^{-f_1(\x,\u)}_{\c^{000}(\overleftarrow{f_{1},0,0})}\\&
((e^{-f_2(\x,\u)}_{\c^{000}(\overleftarrow{f_{1},f_2,0})}e^{-f_3(\x,\u)}_{\c^{000}(\overleftarrow{f_{1},f_2,f_3})}+e^{-f_2(\x,\u)}_{\c^{100}(\overleftarrow{f_{1},f_2,0})}e^{-f_3(\x,\u)}_{\c^{100}(\overleftarrow{f_{1},f_2,f_3})})
\\& e^{-f_1(\y,\u)} e^{-f_2(\y,\u)}e^{-f_3(\y,\u)}\\&
+(e^{-f_2(\x,\u)}_{\c^{000}(\overleftarrow{f_{1},f_2,0})}e^{-f_3(\x,\u)}_{\c^{010}(\overleftarrow{f_{1},f_2,f_3})}+e^{-f_2(\x,\u)}_{\c^{100}(\overleftarrow{f_{1},f_2,0})}e^{-f_3(\x,\u)}_{\c^{110}(\overleftarrow{f_{1},f_2,f_3})})
\\& e^{-f_1(\y,\u)} e^{f_2(\y,\u)}e^{-f_3(\y,\u)}\\&
+(e^{-f_2(\x,\u)}_{\c^{000}(\overleftarrow{f_{1},f_2,0})}e^{-f_3(\x,\u)}_{\c^{100}(\overleftarrow{f_{1},f_2,f_3})}+e^{-f_2(\x,\u)}_{\c^{100}(\overleftarrow{f_{1},f_2,0})}e^{-f_3(\x,\u)}_{\c^{000}(\overleftarrow{f_{1},f_2,f_3})}
\\& e^{f_1(\y,\u)} e^{-f_2(\y,\u)}e^{-f_3(\y,\u)}\\&
+(e^{-f_2(\x,\u)}_{\c^{000}(\overleftarrow{f_{1},f_2,0})}e^{-f_3(\x,\u)}_{\c^{110}(\overleftarrow{f_{1},f_2,f_3})}+e^{-f_2(\x,\u)}_{\c^{100}(\overleftarrow{f_{1},f_2,0})}e^{-f_3(\x,\u)}_{\c^{010}(\overleftarrow{f_{1},f_2,f_3})})
\\& e^{f_1(\y,\u)} e^{f_2(\y,\u)}e^{-f_3(\y,\u)})\\&
\prod_{l=4}^d e^{-f_l(\x,\u)}e^{-f_l(\y,\u)}
\end{aligned}\end{eqnarray}
For $\delta=3$ we look at all strictly lower triangular matrices $J\in\{0,1\}^{\delta\times \delta}$ with the property 
\begin{eqnarray}\begin{aligned}\label{Jproperty}
\forall k=1,...,\delta:(\sum_{l=1}^\delta(J)_{l,k})\bmod 2=j_{k}.
\end{aligned}\end{eqnarray}
 That means the $l$-th row $(J)_{l}$ of $J$ contains a multi-index $(J)_{l}\in\{0,1\}^{\delta}$, with the last $\delta-l-1$ entries being zero and the $k$-th column sum being even when $j_k=0$ and being odd when $j_k=1$. For example the first multi-index is $\j=(0,0,0)$. There are only two different strictly lower triangular matrices that have columns summing up to even numbers: 
\begin{eqnarray}\begin{aligned}
J=\begin{pmatrix}0&0&0\\0&0&0\\0&0&0\end{pmatrix} \text{ and } J=\begin{pmatrix}0&0&0\\1&0&0\\1&0&0\end{pmatrix}.
\end{aligned}\end{eqnarray} 
Their first row contains the multi-index that belongs to $e^{-f_1(\x,\u)}$, the second one belongs to $e^{-f_2(\x,\u)}$ and so on. So the summands with exactly these multi-indices are the ones assigned to the product of exponentials whose signs are invariant during the reordering. With this notation and all $J\in\{0,1\}^{3\times3}$ that satisfy the property (\ref{Jproperty}) we can write
\begin{eqnarray}\begin{aligned}
\prod_{l=1}^d e^{-f_l(\x+\y,\u)}=&\sum_{\j\in\{0,1\}^{3}}\sum_{J}\prod_{l=1}^3 e^{-f_l(\x,\u)}_{\c^{(J)_l}(\overleftarrow{f_{1},...,f_l,0,...,0})} \prod_{l=1}^3e^{-(-1)^{j_l}f_l(\y,\u)}
\\& \prod_{l=4}^d e^{-f_l(\x,\u)}e^{-f_l(\y,\u)}.
\end{aligned}\end{eqnarray}
Using mathematical induction with matrices $J\in\{0,1\}^{\delta\times \delta}$ like introduced above for growing $\delta$ and corollary \ref{k:expcom} repeatedly until we reach $\delta=d$ we get
\begin{eqnarray}\begin{aligned}
 =&\sum_{\j\in\{0,1\}^{d}}\sum_{J}\prod_{l=1}^{d} e^{-f_l(\x,\u)}_{\c^{(J)_l}(\overleftarrow{f_{1},...,f_l,0,...,0})} \prod_{l=1}^{d}e^{-(-1)^{j_l}f_l(\y,\u)}.
\end{aligned}\end{eqnarray}
\end{proof}
\begin{rem}\label{b:upperlimit}
The number of actually appearing summands is usually much smaller than in Theorem \ref{t:shift}. It is determined by the amount of distinct strictly lower (upper) triangular matrices $J$ with entries being either zero or one, particularly
\begin{eqnarray}\begin{aligned}\label{upperlimit}
 2^{\frac{d(d-1)}2}.
\end{aligned}\end{eqnarray}
\end{rem}
%
%
\begin{thm}[Shift]\label{t:shift}\index{geometric Fourier transform!shift theorem}
Let $\A(\x)=\B(\x-\x_0)$ be multivector fields, $F_1,F_2$ be linear with respect to $\x$, ${\j\in\{0,1\}^{\mu}},\k\in\{0,1\}^{(\nu-\mu)}$ be multi-indices and $F_1(\j),F_2(\k)$ as introduced in Definition \ref{d:fj}
, then a separable GFT suffices
\begin{eqnarray}\begin{aligned}
\F_{F_1,F_2}(\A)(\u)=&\sum_{\j,\k}\sum_{J,K}\prod_{l=1}^{\mu} e^{-f_l(\x_0,\u)}_{\c^{(J)_l}(\overleftarrow{f_{1},...,f_l,0,...,0})} \F_{F_1(\j),F_2(\k)}(\B)(\u)
\\& \prod_{l=\mu+1}^{\nu} e^{-f_l(\x_0,\u)}_{\c^{(K)_{l-\mu}}(\overrightarrow{0,...,0,f_{l},...,f_{\nu}})},
\end{aligned}\end{eqnarray}
where $J\in\{0,1\}^{\mu\times \mu}$ and $K\in\{0,1\}^{(\nu-\mu) \times (\nu-\mu)}$ are the strictly lower, respectively upper, triangular matrices with rows $(J)_{l},(K)_{l-\mu}$ summing up to $(\sum_{l=1}^{\mu}(J)_{l})\bmod 2=\j$ respectively $(\sum_{l=\mu+1}^{\nu}(K)_{l-\mu})\bmod 2=\k$ as in Lemma \ref{l:prodtrennen}.
\end{thm}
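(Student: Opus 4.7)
The plan is to reduce the shift theorem to Lemma \ref{l:prodtrennen} after a change of variables.

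First I would substitute $\y = \x - \x_0$ in the defining integral of $\F_{F_1,F_2}(\A)(\u)$. Since the Jacobian is $1$ and $\A(\x) = \B(\y)$, this gives
\begin{eqnarray}\begin{aligned}
\F_{F_1,F_2}(\A)(\u) = \int_{\R^m} \prod_{f \in F_1} e^{-f(\y + \x_0, \u)}\, \B(\y) \prod_{f \in F_2} e^{-f(\y + \x_0, \u)} \d^m \y.
\end{aligned}\end{eqnarray}

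Next I would exploit linearity of each $f_l$ in its first argument together with Lemma \ref{l:prodtrennen} to separate the shift $\x_0$ from the integration variable $\y$. For the left product (over $F_1$) I would apply the first, strictly lower triangular form of Lemma \ref{l:prodtrennen} with the lemma's $\x$ set to $\x_0$ and its $\y$ set to our integration variable; this yields the decomposed $\x_0$-dependent exponentials written with $\overleftarrow{f_1,\ldots,f_l,0,\ldots,0}$ on the left, followed by sign-flipped integration-variable exponentials on the right. Symmetrically, for the right product (over $F_2$) I would apply the second, strictly upper triangular form of the same lemma, placing the sign-flipped integration-variable exponentials on the left and the decomposed $\x_0$-dependent exponentials (with $\overrightarrow{0,\ldots,0,f_l,\ldots,f_\nu}$) on the right. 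Separability guarantees that the decomposed $\x_0$-pieces are genuinely independent of $\y$, so Lemma \ref{l:prodtrennen} is applicable.

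At this stage all factors depending on $\x_0$ are outside the integration variable's exponentials and constant in $\y$, so they can be pulled out of the integral on their respective sides. The remaining integral is by definition $\F_{F_1(\j),F_2(\k)}(\B)(\u)$, and finally summing over the admissible multi-indices $\j,\k$ and the admissible triangular matrices $J,K$ yields the stated formula.

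The main obstacle is purely notational bookkeeping: making sure that the left/right orientation of the arrows $\overleftarrow{\,\cdot\,}$ and $\overrightarrow{\,\cdot\,}$, the lower/upper structure of $J$ and $K$, and the padding of the commutativity multi-indices by zeros on the correct side all line up with the statement. In particular one must verify that, after the change of variables, it is the $\x_0$-factors (not the $\y$-factors) that carry the $\c^{(J)_l}$ and $\c^{(K)_{l-\mu}}$ labels, which is precisely what the two orientations of Lemma \ref{l:prodtrennen} are designed to deliver on the left and right of the integrand, respectively.
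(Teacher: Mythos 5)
Your proposal is correct and follows essentially the same route as the paper's own proof: a change of variables $\y=\x-\x_0$, then the two orientations of Lemma \ref{l:prodtrennen} (lower-triangular form on the $F_1$ side with $\x_0$ as the decomposed argument, upper-triangular form on the $F_2$ side), and finally pulling the $\y$-independent $\x_0$-factors out of the integral to recognize $\F_{F_1(\j),F_2(\k)}(\B)(\u)$. The bookkeeping you flag (arrow orientation, zero-padding, which factors carry the $\c$-labels) is exactly how the paper handles it, so no gap remains.
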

%
\begin{proof}
First we put the transformed function down to $\B(\y)$ using a change of coordinates. 
\begin{eqnarray}\begin{aligned}
\F_{F_1,F_2}(\A)(\u)=&\int_{\R^m}\prod_{l=1}^{\mu} e^{-f_l(\x,\u)}\A(\x)\prod_{l=\mu+1}^{\nu}e^{-f_l(\x,\u)}\d^m\x\\
=&\int_{\R^m}\prod_{l=1}^{\mu} e^{-f_l(\x,\u)}\B(\x-\x_0)\prod_{l=\mu+1}^{\nu}e^{-f_l(\x,\u)}\d^m\x\\
\overset{\y=\x-\x_0}=&\int_{\R^m}\prod_{l=1}^{\mu} e^{-f_l(\y+\x_0,\u)}\B(\y)\prod_{l=\mu+1}^{\nu}e^{-f_l(\y+\x_0,\u)}\d^m\y
\end{aligned}\end{eqnarray}
Now we separate and sort the factors with the above Lemma \ref{l:prodtrennen}.
\begin{eqnarray}\begin{aligned}
\overset{\text{ Lem. }\ref{l:prodtrennen}}=&\int_{\R^m}\sum_{\j\in\{0,1\}^{\mu}}\sum_{\stackrel{J\in\{0,1\}^{\mu\times \mu}}{\sum(J)_l\bmod 2=\j}}
\\& \prod_{l=1}^{\mu} e^{-f_l(\x_0,\u)}_{\c^{(J)_l}(\overleftarrow{f_{1},...,f_l,0,...,0})}\prod_{l=1}^{\mu}e^{-(-1)^{j_l}f_l(\y,\u)}\B(\y)\\
&\sum_{\k\in\{0,1\}^{(\nu-\mu)}}\sum_{\stackrel{K\in\{0,1\}^{(\nu-\mu) \times (\nu-\mu)}}{\sum(K)_l\bmod 2=\k}}
\\& \prod_{l=\mu+1}^{\nu}e^{-(-1)^{k_{l-\mu}}f_l(\y,\u)}\prod_{l=\mu+1}^{\nu} e^{-f_l(\x_0,\u)}_{\c^{(K)_{l-\mu}}(\overrightarrow{0,...,0,f_{l},...,f_{\nu}})}\d^m\y\\
=&\sum_{\j,\k}\sum_{J,K}\prod_{l=1}^{\mu} e^{-f_l(\x_0,\u)}_{\c^{(J)_l}(\overleftarrow{f_{1},...,f_l,0,...,0})}
\\& \F_{F_1(\j),F_2(\k)}(\B)(\u)\prod_{l=\mu+1}^{\nu} e^{-f_l(\x_0,\u)}_{\c^{(K)_{l-\mu}}(\overrightarrow{0,...,0,f_{l},...,f_{\nu}})}
\end{aligned}\end{eqnarray}
\end{proof}
\begin{cor}[Shift]\label{k:shift}
 Let $\A(\x)=\B(\x-\x_0)$ be multivector fields, $F_1$ and $F_2$ each consist of mutually commutative functions\footnote{Cross commutativity between $F_1$ and $F_2$ is not necessary.} being linear with respect to $\x$, then the GFT obeys
\begin{eqnarray}\begin{aligned}
\F_{F_1,F_2}(\A)(\u)=&\prod_{l=1}^{\mu} e^{-f_l(\x_0,\u)} \F_{F_1,F_2}(\B)(\u)\prod_{l=\mu+1}^{\nu} e^{-f_l(\x_0,\u)}.
\end{aligned}\end{eqnarray}
\end{cor}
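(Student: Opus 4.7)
The plan is to deduce Corollary~\ref{k:shift} by collapsing the double sum on the right-hand side of Theorem~\ref{t:shift} to a single term under the mutual commutativity hypothesis.

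First I would show that if $f_k(\x_0,\u)$ and $f_l(\x_0,\u)$ commute in $\clifford{p,q}$, then $e^{-f_l(\x_0,\u)}$ also commutes with $f_k(\x_0,\u)$: every partial sum of the exponential series from Definition~\ref{d:exp} is a polynomial in $f_l(\x_0,\u)$, hence commutes with $f_k(\x_0,\u)$. Applying Lemma~\ref{l:comanticom} with $\A = e^{-f_l(\x_0,\u)}$ and $\B = f_k(\x_0,\u)$ then gives that the anticommuting part $(e^{-f_l(\x_0,\u)})_{\c^1(f_k(\x_0,\u))}$ vanishes and the entire exponential agrees with its own $\c^0$ part.

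Next I would feed this observation into the iterated decomposition $e^{-f_l(\x_0,\u)}_{\c^{(J)_l}(\overleftarrow{f_1,\dots,f_l,0,\dots,0})}$ from Theorem~\ref{t:shift}. Since by hypothesis the functions in $F_1$ pairwise commute at $(\x_0,\u)$, for every $l\in\{1,\dots,\mu\}$ and every $k<l$ any row $(J)_l$ with a $1$ in position $k$ annihilates the corresponding summand. Because $J$ is strictly lower triangular, its entries in row $l$ at columns $\ge l$ are already zero; hence only the all-zero row survives, so the only contributing matrix is $J=0$. This forces $\j=(0,\dots,0)$, yields $F_1(\j)=F_1$ by Definition~\ref{d:fj}, and reduces each subscripted exponential back to $e^{-f_l(\x_0,\u)}$ itself. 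A symmetric argument using strictly upper triangular $K$ and the right-hand decomposition shows that only $K=0$ and $\k=(0,\dots,0)$ survive, with $F_2(\k)=F_2$.

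Substituting these reductions into the identity of Theorem~\ref{t:shift} delivers the claimed formula at once. The main obstacle is the bookkeeping step linking the pointwise vanishing of anticommuting parts to the collapse of the full index set over $(J,\j)$ (and $(K,\k)$); this is essentially the content of the footnote to Corollary~\ref{k:uniformC}, namely that no nonzero multivector can simultaneously commute and anticommute with a fixed invertible factor, combined with the strict triangularity of $J$ and $K$ which forces the surviving rows to be zero one at a time.
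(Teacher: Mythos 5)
Your argument is correct and is essentially the derivation the paper intends: Corollary \ref{k:shift} is stated without its own proof as an immediate consequence of Theorem \ref{t:shift}, and collapsing the sum by noting that mutual commutativity (together with separability, via Lemma \ref{l:comanticom} and Definition \ref{d:c}) kills every anticommuting part, so only $J=0$, $K=0$, $\j=\k=(0,\dots,0)$ survive and $F_1(\j)=F_1$, $F_2(\k)=F_2$, is exactly that route. The appeal to the footnote of Corollary \ref{k:uniformC} is unnecessary — the vanishing $(e^{-f_l})_{\c^1(f_k)}=0$ already follows from Definition \ref{d:c} as you note in your first paragraph — but this does not affect correctness.
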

\begin{rem}
 For sets $F_1,F_2$ that each consist of less than two functions the condition of corollary \ref{k:shift} is necessarily satisfied, compare e. g. reference examples 
1,3 and 4.
\end{rem}
 The specific forms, our standard examples 
take, are summarized in Table 3. As expected they are often shorter than what could be expected from Remark \ref{b:upperlimit}.
\par
\begin{table}[hbt!]
\begin{tabular}{|l|l|l|}
\hline
&GFT		& $\A(\x)=\B(\x-\x_0)$	 	\rule [-1.8mm]{0mm}{6mm}\\\hline
1.& Clifford 	& $\F_{f_1}= \F_{f_1} e^{-2\pi i\x_0\cdot\u}$ \rule [-1.8mm]{0mm}{6mm}\\
2.& B\"ulow		& $\F_{f_1,...,f_n}=\sum_{\k\in\{0,1\}^n}\sum_{K}\F_{(-1)^{k_1}f_1,...,(-1)^{k_n}f_n}$
\rule [-1.8mm]{0mm}{6mm}\\&&\phantom{$\F_{f_1,...,f_n}=$}$
\prod_{l=1}^{n} e^{-2\pi {x_0}_ku_k}_{\c^{(K)_{l}}(\overrightarrow{0,...,0,f_{l},...,f_{n}})}$ 			\rule [-1.8mm]{0mm}{6mm}\\
3.& Quaternionic	& $\F_{f_1,f_2}= e^{-2\pi i{x_0}_1u_1}\F_{f_1,f_2} e^{-2\pi j{x_0}_2u_2}$	 \rule [-1.8mm]{0mm}{6mm}\\
4.& Spacetime	& $\F_{f_1,f_2}= e^{-\e_4{x_0}_4u_4}\F_{f_1,f_2}e^{-\epsilon_4\e_4i_4(x_1u_1+x_2u_2+x_3u_3)}$	 \rule [-1.8mm]{0mm}{6mm}\\
5.& Color Image	&$\F_{f_1,f_2,f_3,f_4}=e^{-\frac12({x_0}_1u_1+{x_0}_2u_2)(\B+i\B)}\F_{f_1,f_2,f_3,f_4}$ \rule [-1.8mm]{0mm}{6mm}\\&&\phantom{$\F_{f_1,f_2,f_3,f_4}=$}$
e^{\frac12({x_0}_1u_1+{x_0}_2u_2)(\B+i\B)}$	\rule [-1.8mm]{0mm}{6mm}\\
6.& Cyl. $n=2$	& $\F_{f_1}=e^{\x_0\wedge\u}\F_{f_1}$				\rule [-1.8mm]{0mm}{6mm}\\
& Cyl. $n\neq2$	& -				\rule [-1.8mm]{0mm}{6mm}\\\hline
\end{tabular}
\caption{Theorem \ref{t:shift} (Shift) applied to the GFTs of the first example
, enumerated in the same order. Notations: on the LHS $\F_{F_1,F_2}=\F_{F_1,F_2}(\A)(\u)$, on the RHS $\F_{F^{\prime}_1,F^{\prime}_2}=\F_{F^{\prime}_1,F^{\prime}_2}(\B)(\u)$, in the second row $K$ represents all strictly upper triangular matrices $\in\{0,1\}^{n \times n}$ with rows $(K)_{l-\mu}$ summing up to $(\sum_{l=\mu+1}^{\nu}(K)_{l-\mu})\bmod 2=\k$. The simplified shape of the color image FT results from the commutativity of $\B$ and $i\B$ and application of Lemma \ref{l:potenzgesetz}.}
\end{table}
%
%
\section{Conclusions and Outlook}
%
For multivector fields over $\R^{p,q}$ with values in any geometric algebra $G^{p,q}$ we have successfully defined a general geometric Fourier transform. It comprehends all popular Fourier transforms from current literature in the introductory example
.
Its existence, independent from the specific choice of functions $F_1,F_2$, could be proved for all integrable multivector fields, see Theorem \ref{t:ex}.
Theorem \ref{t:lin} shows that our geometric Fourier transform is generally linear over the field of real numbers.
All transforms from the reference example 
consist of bilinear $F_1$ and $F_2$. We proved that this property is sufficient to ensure the scaling property of Theorem \ref{t:scaling}. 
\par
If a general geometric Fourier transform is separable as introduced in Definition \ref{d:sep}, then Theorem \ref{t:products} (Left and right products) guarantees that constant factors can be separated from the vector field to be transformed. As a consequence general linearity is achieved by choosing $F_1,F_2$ with values in the center of the geometric algebra $\clifford{p,q}$, compare Corollary \ref{k:lin}. All examples except for the cylindrical Fourier transform \cite{BSS10} satisfy this claim.
\par
Under the condition of linearity with respect to the first argument of the functions of the sets $F_1$ and $F_2$ additionally to the just mentioned separability, we also proved a shift property (Theorem \ref{t:shift}).
\par
%
%
In future papers we are going to state the necessary constraints for a generalized convolution theorem, invertibility, derivation theorem and we will examine how simplifications can be achieved based on symmetry properties of the multivector fields to be transformed. We will also construct generalized geometric Fourier transforms in a broad sense from combinations of the ones introduced in this paper and from decomposition into their sine and cosine parts which will also cover the vector and bivector Fourier transforms of \cite{BS09}. It would further be of interest to extend our approach to Fourier transforms defined on spheres or other non-Euclidean manifolds, to functions in the Schwartz space and to square-integrable functions.
 \addcontentsline{toc}{section}{References}
 \bibliographystyle{unsrt} 
 \bibliography{./Literaturverzeichnis}

%

\end{document}